\documentclass[8pt]{article}
\usepackage{indentfirst,latexsym,bm}
\usepackage{amsfonts}
\usepackage{amssymb}
\usepackage{times}
\usepackage[leqno]{amsmath}
\usepackage{dsfont}
\usepackage[all]{xy}
\usepackage{color,xcolor}
\usepackage{amsthm}
\usepackage{hyperref}
\usepackage{tikz-cd}
\usepackage{tikz}
\usetikzlibrary{matrix,arrows.meta, positioning}

\usepackage[text={14cm,20cm}, centering]{geometry}

\begin{document}
\newcommand{\Z}{\mathbb{Z}}
\newcommand{\rank}{\text{rank}}
\date{}
\newtheorem{theo}{Theorem}[section]
\newtheorem{prop}[theo]{Proposition}
\newtheorem{lemm}[theo]{Lemma}
\newtheorem{coro}[theo]{Corollary}
\theoremstyle{definition}
\newtheorem{defi}[theo]{Definition}
\newtheorem{exam}[theo]{Example}
\newtheorem{rema}[theo]{Remark}

\title{Irreducible $\mathbb{Z}_+$-modules over some $\mathbb{Z}_+$-rings}
\author{Yue Meng\thanks{Email:\,yuemengmath@163.com}\\{School  of Mathematical Science,  Yangzhou University}}
\maketitle

\abstract
Let $d_1,\ldots,d_r$ be pairwise relatively prime positive square-free integers, with  $d_j\geq2$ for all $1\leq j\leq r$. Using elementary matrices and combinatorial mathematics, we give a complete classification of the  irreducible  $\mathbb{Z}_+$-modules over the domain $\mathbb{Z}[d^{\frac{1}{N_1}}_1,d^{\frac{1}{N_2}}_2,\dots,d^{\frac{1}{N_r}}_r]$, where $N_i \geq2$ for all $ 1\leq i \leq r$. Furthermore, we explicitly construct all of these irreducible $\mathbb{Z}_+$-modules.

\noindent {\bf Keywords:} Irreducible $\mathbb{Z}_+$-module; $\mathbb{Z}_+$-ring; non-negative integer matrix
\section{Introduction}
Tensor categories are regarded as the categorical counterparts of  rings in category theory, forming a higher-dimensional categorification of groups and rings. The combinatorics needed to study tensor categories is the theory of $\mathbb{Z}_+$-rings, i.e., rings equipped with a basis where the structure constants are non-negative integers. Such rings serve as Grothendieck rings for tensor categories, and it has been observed that numerous properties of tensor categories are fundamentally combinatorial in nature, originating  from specific properties of the Grothendieck ring. The concept of $\mathbb{Z}_+$-ring and $\mathbb{Z}_+$-module was introduced by G. Lusztig in \cite{Lu}, and these rings have been extensively studied in \cite{Davy,EK,O}. By definition, a $\mathbb{Z}_+$-module over a given $\mathbb{Z}_+$-ring corresponds to a set of matrices with non-negative integer coefficients that satisfy certain conditions.

Let $G$ be a finite group. The irreducible $\mathbb{Z}_+$-modules over the group ring $\mathbb{Z}[G]$ are characterized in \cite{EK}. Furthermore, V. Ostrik has demonstrated that there are finitely many inequivalent classes of irreducible $\mathbb{Z}_+$-modules for a given $\mathbb{Z}_+$-ring of finite rank \cite{O}, which makes it feasible to study the irreducible $\mathbb{Z}_+$-modules for some $\mathbb{Z}_+$-rings of finite rank. However, in general, classifying the irreducible $\mathbb{Z}_+$-modules of an arbitrary $\mathbb{Z}_+$-ring is a non-trivial endeavor, see \cite{YL}\cite{CLZ}\cite{CCML} for example.

In  \cite{CLZ}, they use the  matrix  method to study the irreducible $\Z_+$-modules of the domain $R=\Z[d^{\frac{1}{2}}_1,d^{\frac{1}{2}}_2,\dots,d^{\frac{1}{2}}_r]$, where $d_1,\ldots,d_r$ are positive square-free integers that are pairwise relatively prime. 
In this note, we generalized the results of \cite{CLZ} by employing more combinatorial methods, which were not utilized in \cite{CLZ}.
Applying them to study the irreducible $\Z_+$-modules of the domain $R=\Z[d^{\frac{1}{N_1}}_1,d^{\frac{1}{N_2}}_2,\dots,d^{\frac{1}{N_r}}_r]$, where $N_i\geq2$ for all $1\leq i \leq r$, and $d_1,\ldots,d_r$ are positive square-free integers that are pairwise relatively prime. Then $1$ and $d^{\frac{i_1}{N_{j_1}}}_{j_1}d^{\frac{i_2}{N_{j_2}}}_{j_2}\cdots d^{\frac{i_{s}}{N_{j_{s}}}}_{j_{s}}$, where $0\leq i_s\leq N_{j_{s}}-1$ for all $1\leq s\leq r$, form a  $\Z_+$-basis of $R$, so $\rank(R)=N_1N_2\cdots N_r$. It is worthwhile to note that  the condition that $d_1,\ldots,d_r$ are pairwise relatively prime is necessary. Indeed, if $d_i$ and $d_j$ have  a common divisor that is greater than $1$, then  it is not easy to give a complete form of the $\Z_+$-basis of $R$, and some of the conclusions of this note also  fail, see Remark \ref{Example}, for example.

Let $d$ be a positive square-free integer. We show that any irreducible $\Z_+$-module of $\Z[d^{\frac{1}{N}}]$ is isomorphic to  a free $\Z$-module
\begin{align*}
&\mathbb{Z} \langle \prod\limits_{i=1}^{N-1}c_{N-i}^{\frac{i}{N}},\frac{1}{c_{1}}d^{\frac{1}{N}}\prod\limits_{i=1}^{N-1}c_{N-i}^{\frac{i}{N}},\frac{1}{c_1c_2}d^{\frac{2}{N}}\prod\limits_{i=1}^{N-1}c_{N-i}^{\frac{i}{N}} ,\dots,\frac{1}{c_1c_2\cdots c_{N-1}}d^{\frac{N-1}{N}}\prod\limits_{i=1}^{N-1}c_{N-i}^{\frac{i}{N}} \rangle 
\\=& \Z\langle 
c^{\frac{1}{N}}_{N-1}\cdots c^{\frac{N-2}{N}}_{2} c^{\frac{N-1}{N}}_{1},c^{\frac{1}{N}}_{N}\cdots c^{\frac{N-2}{N}}_{3} c^{\frac{N-1}{N}}_{2},c^{\frac{1}{N}}_{1}\cdots c^{\frac{N-2}{N}}_{4} c^{\frac{N-1}{N}}_{3},\dots,c^{\frac{1}{N}}_{N-2}\cdots c^{\frac{N-2}{N}}_{1} c^{\frac{N-1}{N}}_{N}
\rangle,
\end{align*}
for some positive divisors $c_1,c_2,\dots,c_N$ of $d$ and $c_1c_2\cdots c_N=d$ (see Corollary \ref{resultonZ[d]}). Given an irreducible $\Z_+$-module $M$ over $R=\Z[d^{\frac{1}{N_1}}_1,d^{\frac{1}{N_2}}_2,\dots,d^{\frac{1}{N_r}}_r]$, for any $1\leq j\leq r$, we prove that $M$ is isomorphic to a direct sum  of
\begin{align*}
\mathbb{Z}\langle 
c^{\frac{1}{N_j}}_{(N_j-1)j}\cdots c^{\frac{N_j-2}{N_j}}_{2j} c^{\frac{N_j-1}{N_j}}_{1j},c^{\frac{1}{N_j}}_{N_jj}\cdots c^{\frac{N_j-2}{N_j}}_{3j} c^{\frac{N_j-1}{N_j}}_{2j},\dots,c^{\frac{1}{N_j}}_{(N_j-2)j}\cdots c^{\frac{N_j-2}{N_j}}_{1j} c^{\frac{N_j-1}{N_j}}_{N_jj}
\rangle,
\end{align*}
as a $\Z[d_j^{\frac{1}{N_j}}]$-module where $c_{1j}c_{2j}\cdots c_{N_{j}j}=d_j$ and  that $M$ has the same rank as $R$ in Theorem \ref{TypeRank}, then we give a complete classification of irreducible $\Z_+$-modules of $R$ in Theorem \ref{maintheorem}.

The paper is organized as follows. In Section \ref{Preliminary}, we review the definitions of $\Z_+$-ring and $\Z_+$-module. In Section \ref{section3}, we first give all the non-negative integer solutions of the matrix equation $A^N=dI_n$, where $d$ is a square free integer and $N\geq 2$ (Theorem \ref{oned}). We then classify the irreducible $\Z_+$-modules over $\Z[d^{\frac{1}{N}}]$ (Corrollary \ref{resultonZ[d]}). Finally, we classify the irreducible $\Z_+$-modules over $\Z[d^{\frac{1}{N_1}}_1,d^{\frac{1}{N_2}}_2,\dots,d^{\frac{1}{N_r}}_r]$, where $d_1,\dots,d_r$ are pairwise coprime square-free integers and $N_i\geq 2$ for $1\leq i\leq r$ (Theorem \ref {TypeRank} and Theorem \ref {maintheorem}).

\section{Preliminaries}\label{Preliminary}

Let $\Z_+$ denote the semi-ring of non-negative integers, and $M_n(\Z_+)$ the set of square matrices of order $n$ with coefficients being non-negative integers, which is a semiring. In this section, we recall the definitions of $\Z_+$-ring and $\Z_+$-module, see \cite{EGNO,O}.
\begin{defi}Let $R$ be a unital domain such that $R$ is free as a $\Z$-module with basis $\{x_1,\dots, x_n\}$, where $x_1=1$. If there exist $c_{ij}^k\in\Z_+$ such that  $x_ix_j=\sum_{k=1}^nc_{ij}^kx_k$ for all $1\leq i,j\leq n$, then $R$ is called a unital $\Z_+$-ring, $c_{ij}^k$ are structure constants of $R$, and $n$ is the rank of $R$.
\end{defi}
\begin{defi}Let $R$ be a unital $\Z_+$-ring with basis $\{x_1,\dots, x_n\}$.  An $R$-module $M$ is called a  $\Z_+$-module of $R$ with $\Z_+$-basis $\{m_1,m_2,\dots,m_t\}$, if $x_im_j=\sum_{k=1}^ta_{ij}^km_k$, $a_{ij}^k\in\Z_+$, for $1\leq i\leq n$, $1\leq j\leq t$, and $t$ is called the rank of   $M$.

If a $\Z$-submodule  $M_1$ of $M$ spanned by $\{m_{j_1},\dots,m_{j_s}\}\subseteq\{m_1,m_2,\dots,m_t\}$  is an
$R$-submodule of $M$, then we say that $M_1$ is a $\Z_+$-submodule of $M$. $M_1$ is also called the $\Z_+$-submodule generated by $\{m_{j_1},\dots,m_{j_s}\}$, and it will be denoted by $(m_{j_1},\dots,m_{j_s})$ below. A $\Z_+$-module $M$ is irreducible if it has no proper $\Z_+$-submodule.
\end{defi}
It is easy to see that  $M$ is a $\Z_+$-module of ring $R$ is equivalent to that there exists a homomorphism of semiring from $R$ to $M_n(\Z_+)$, given by $x_{i} \mapsto (a_{ij}^k)_{n \times n}$ for $1\leq i\leq n$, where $n$ is the rank of $M$.

\begin{defi}
Let $M,M'$ be $\Z_+$-modules of $\Z_+$-ring $R$. Assume that  $\{m_1,\dots,m_t\}$ and  $\{m_1',\dots,m_t'\}$ are the set of $\Z_+$-bases of $M$ and $M'$, respectively; moreover, assume $x_im_j=\sum_{k=1}^ta_{ij}^km_k$ and $x_im_j'=\sum_{l=1}^tb_{ij}^lm_l'$. If there exists an $R$-module homomorphism $\psi:M\to M'$ and $\sigma\in S_t$ such that $\psi(m_i)=m_{\sigma(i)}'$, where $S_t$ is the symmetric group, then  $M$ is said to  be isomorphic to $M'$, and denoted by $M\cong M'$.
\end{defi}

\begin{rema}\label{transfer}
Assume $\psi:M\to M^{\prime}$ is an isomorphism of $\Z_+$-modules of $R$. Then $\psi(x_im_j)=x_i\psi(m_j) =x_im_{\sigma(j)}'$, that is $\sum_{k=1}^ta_{ij}^km_{\sigma(k)}'=\sum_{l=1}^tb_{i\sigma(j)}^lm_l'$.
 Let  $A_i:=(a_{ij}^k)_{t\times t}$ and $B_i:=(b_{ij}^k)_{t\times t}$, where $1\leq i\leq n$, $1\leq j,k\leq t$. Therefore, two $\Z_+$-modules $M$ and  $M'$ are isomorphic if and only if there exists $\sigma\in S_t$  such that  $a_{ij}^k=b_{i\sigma(j)}^{\sigma^{-1}(k)}$. Equivalently, there exists a permutation matrix $P$ such that $B_i=PA_iP^{-1}$ for all $1\leq i\leq n$.  Thus, in this note, we could use the matrix method to classify the isomorphism classes of the  irreducible $\Z_+$-modules over the domain $\Z[d^{\frac{1}{N_1}}_1,d^{\frac{1}{N_2}}_2,\dots,d^{\frac{1}{N_r}}_r]$, where $N_i\geq2$ for all $1\leq i \leq r$, and $d_1,\ldots,d_r$ are pairwise relatively prime  positive square-free integers with $d_j\geq 2$ for all $1\leq j\leq r$.
\end{rema}

\section{The main theorem}\label{section3}
In this section, we give a complete classification of irreducible $\Z_+$-modules over the domain $\Z[d^{\frac{1}{N_1}}_1,d^{\frac{1}{N_2}}_2,\dots,d^{\frac{1}{N_r}}_r]$, where $d_1,\dots,d_r$ are pairwise coprime square-free integers and $N_i\geq 2$ for $1\leq i\leq r$.
For $1\leq i\neq j\leq n,$ let $E(i,j)$ be the  elementary matrix obtained by interchanging the $i$-th row and $j$-th row of the identity matrix $I_n$. 

Consider dividing the set of positive integers
$\{1,2,3,\dots,Nt\}$
 into $t$ pairwise disjoint subsets, where $N\geq 2$ and $t\geq 1$. These subsets are denoted by:
$$
 \{ 
i_{1,1},i_{2,1},\dots ,i_{N,1}\},\{ 
i_{1,2},i_{2,2},\dots ,i_{N,2}\},\dots,\{ i_{1,t},i_{2,t},\dots, i_{N,t}\}.
\nonumber
$$
The corresponding $N\text-$cycles are
$
 (i_{1,1}i_{2,1}\cdots i_{N,1}),(i_{1,2}i_{2,2}\cdots i_{N,2}),\dots, (i_{1,t}i_{2,t}\cdots i_{N,t}). 
\nonumber
$
In the following text, we omit commas for brevity and write them as: $$
 (i_{11}i_{21}\cdots i_{N1}),(i_{12}i_{22}\cdots i_{N2}),\dots, (i_{1t}i_{2t}\cdots i_{Nt}). 
\nonumber
$$

To establish the first theorem, we require the following three lemmas.

\begin{lemm}\label{simpleLemm1}  
For any $N \geq 2$ and $t\geq 1$ there exist $2\text-$cycles denoted by
$(i_1 j_1),\dots,(i_s j_s)$, such that
\begin{small}
\begin{equation}
\begin{aligned}
&[(i_s j_s)\cdots(i_1 j_1)][\prod\limits_{l=1}^{t}(i_{1l}i_{2l}\cdots i_{Nl})][(i_s j_s) \cdots(i_1 j_1)]^{-1}
\\
&=\prod\limits_{k=1}^{t}((N(k-1)+1)(N(k-1)+2)\cdots(Nk)).
\nonumber
\end{aligned}
\end{equation}
\end{small}
\begin{proof}
If $1$ is contains in $(i_{1l}i_{2l}\cdots i_{Nl})$ for some $1\leq l \leq t$, then we can rewrite the cycle as $(1i_{2l}\cdots i_{Nl})$. If $i_{2l} =2$,  no action is required; if $i_{jl}=2$ for some $3\leq j \leq N$, then
\begin{align*}
(2i_{2l})[\cdots(1i_{2l}\cdots i_{Nl})\cdots](2i_{2l})^{-1}=[\cdots(12\cdots )\cdots].
\end{align*}
If $i_{jl}\neq 2$ for all $2\leq j \leq N$, and $2$ is contained in $(i_{1m}i_{2m}\cdots i_{Nm})$ for $m\neq l$, we can rewrite the cycle as $(2i_{2m}\cdots i_{Nm})$, then 
\begin{align*}
(2i_{2l})[\cdots(1i_{2l}\cdots i_{Nl})\cdots(2i_{2m}\cdots i_{Nm})\cdots](2i_{2l})^{-1}=[\cdots(12i_{3l}\cdots i_{Nl})\cdots].
\end{align*}
By repeating this process for the set $\{3,4,5,\dots,Nt\}$, we can prove the lemma.
\end{proof}

\end{lemm}

\begin{lemm}\label{simpleLemm2}  Let $ d\geq 2$ be an arbitrary positive square-free integer, and let $A =(a_{ij}) \in M_{n}(\mathbb{Z}_{+})$, where $n\geq 2$. If $A^{N}=dI_{n}$ for some integer $N\geq 2$, then all the diagonal elements of $A$ are zero.
\begin{proof}
Assuming \( a_{11} \neq 0 \), we analyze the first column of \( A^N \). Each \((i,1)\)-entry of $A^N$ contains a term of the form \( a_{i1} \underbrace{a_{11} \cdots a_{11}}_{N-1} \). Since all these terms are zero, it follows that \( a_{i1} = 0 \) for all \( 2 \leq i \leq n \). Similarly, by examining the first row of \( A^N \), we know that each \((1,j)\)-entry of $A^N$ contains a term of the form \( \underbrace{a_{11} \cdots a_{11}}_{N-1} a_{1j} \). These terms are also zero, leading to the conclusion that \( a_{1j} = 0 \) for all \( 2 \leq j \leq n \). Consequently, the \((1,1)\)-entry of \( A^N \) is \( a_{11}^N \). Since \( a_{11}^N \neq d \) (given that \( d \) is square-free), it is a contradiction. Therefore, \( a_{11} = 0 \). By the same reasoning, we can prove that \( a_{ii} = 0 \) for all \( 1 \leq i \leq n \).
\end{proof}

\end{lemm}

\begin{lemm}\label{simpleLemm3}  Let $ d\geq 2$ be an arbitrary positive square-free integer, and let $A =(a_{ij}) \in M_{n}(\mathbb{Z}_{+})$, where $n\geq 2$. If $A^{N}=dI_{n}$ for some integer $N\geq 2$, then there is exactly one non-zero element in each row and column of $A$.
\begin{proof}
If $n=2$, it is exactly Lemma \ref{simpleLemm2}. Assume $n\geq 3$ below.

Suppose the first row of $A$ has two nonzero entries, denoted as $a_{1p},a_{1q}$. It follows from Lemma \ref{simpleLemm2} that $p\neq 1$ and $q\neq 1$. We claim that there exists a summand of the $(i,1)$-entry, of $A^{N-1}$ that has the form $$a_{ii_1}a_{i_1i_2}a_{i_2i_3}\cdots a_{i_{N-3}i_{N-2}}\\a_{i_{N-2} 1}\neq 0,$$ for some $1\leq i\leq n$. If not, all terms of the form $a_{ii_1}a_{i_1i_2}a_{i_2i_3}\cdots a_{i_{N-3}i_{N-2}}a_{i_{N-2} 1}=0$, then the first column of $A^{N-1}$ is zero, which is impossible. Therefore, the $(i,p)$-entry and the $(i,q)$-entry contain non-zero summands of the form $a_{ii_1}a_{i_1i_2}a_{i_2i_3}\cdots a_{i_{N-3}i_{N-2}}a_{i_{N-2} 1}a_{1p} $ and $a_{ii_1}a_{i_1i_2}a_{i_2i_3}\cdots a_{i_{N-3}i_{N-2}}a_{i_{N-2} 1}a_{1q}$.
If $i\neq p$ and $i\neq q$, the $(i,p),(i,q)$-entry of $A^{N}$ are not zero; if $i=p$, the $(p,q)$-entry of $A^{N}$ is not zero; if $i=q$, the $(q,p)$-entry of $A^{N}$ is not zero, this is a contradiction. Thus the first row can only have one non-zero element. Using the same method, we can prove that any row or column of 
$A$ contains only one non-zero element.
\end{proof}

\end{lemm}

\begin{theo}\label{oned}Let $ d=p_{1}p_{2}\cdot \cdot \cdot p_{m}\geq 2$ be an arbitrary positive square-free integer, where $p_{j}$ are distinct primes. Let $A$ be a non-negative integer matrix such that $A^{N}=dI_{n}$ for some integer $N\geq 2$. Then $n=tN$, where $t \geq 1$ and there exists a permutation matrix $P=E(i_1,j_1)E(i_2,j_2)\cdot \cdot \cdot E(i_s,j_s)$ such that 
$$PAP^{-1}=\begin{bmatrix}
A_{1} &  &  & \\
 & A_{2} &  &\\
 &  & \ddots &\\
 &  &  & A_{t}
\end{bmatrix},$$
where each block $A_k$ is given by
$$A_{k}=
\begin{bmatrix}
0 & \cdots & 0 & c_{Nk}\\
c_{1k} & \cdots & 0 & 0\\
\vdots & \ddots & \vdots &\vdots\\
0 & \cdots &c_{(N-1)k} & 0
\end{bmatrix}
,$$ for $1\leq k \leq t.$ The entries $c_{ik}$ are positive integers satisfying $c_{1k}c_{2k}\cdot \cdot \cdot c_{Nk}=d$, for $1\leq i\leq N$.
\end{theo}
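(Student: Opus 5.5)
By Lemma \ref{simpleLemm3} (for $n\geq 2$), every row and column of $A$ has exactly one non-zero entry. The case $n=1$ is excluded at once: $a^N=d$ is impossible for square-free $d\geq2$. Hence $A=DQ$, where $Q$ is the permutation matrix of some $\pi\in S_n$ and $D=\mathrm{diag}(c_1,\dots,c_n)$ with positive integers $c_i$. Concretely, $Ae_j=c_{\pi(j)}e_{\pi(j)}$, after relabeling the weights.

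Iterating gives $A^Ne_j=\bigl(\prod_{k=0}^{N-1}c_{\pi^{k+1}(j)}\bigr)e_{\pi^N(j)}$. Since $A^N=dI_n$, we get $\pi^N=\mathrm{id}$, and for every $j$ the product of the weights along the $\pi$-orbit segment of length $N$ starting at $j$ equals $d$.

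The key step is to show that every cycle of $\pi$ has length exactly $N$. Let $\ell$ be the length of a cycle containing $j$; then $\ell\mid N$. Write $w$ for the product of the weights around this cycle. Going $N$ steps traverses the cycle $N/\ell$ times, so $w^{N/\ell}=d$. Because $d\geq 2$ is square-free, it is not a perfect power with exponent $>1$, so $N/\ell=1$. Hence all cycles have length $N$, so $n=tN$ with $t$ the number of cycles, and on each cycle $(i_{1k}\,i_{2k}\cdots i_{Nk})$ the weights $c_{1k},\dots,c_{Nk}$ satisfy $c_{1k}\cdots c_{Nk}=d$. I expect this step to be the only real content; the rest is bookkeeping.

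To conclude, apply Lemma \ref{simpleLemm1}. There is a product of transpositions $\tau=(i_sj_s)\cdots(i_1j_1)$ that conjugates $\pi=\prod_l(i_{1l}\cdots i_{Nl})$ into $\prod_{k}((N(k-1)+1)\cdots(Nk))$. Let $P$ be the corresponding product of elementary matrices $E(i,j)$. Then $PAP^{-1}$ has the weighted permutation matrix of the standard consecutive cycles, with the weights carried along. Each $N\times N$ diagonal block therefore sends $e_{N(k-1)+r}$ to a multiple of $e_{N(k-1)+r+1}$ (indices mod $N$ within the block), which is exactly the shape of $A_k$, with subdiagonal entries $c_{1k},\dots,c_{(N-1)k}$ and corner entry $c_{Nk}$. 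The only care needed is to check the orientation convention, i.e.\ whether the cycle matrix is the subdiagonal or the superdiagonal shift; if it comes out transposed, replace $\pi$ by $\pi^{-1}$ before invoking Lemma \ref{simpleLemm1}. Finally, relabel the weights to match the indexing in the statement.
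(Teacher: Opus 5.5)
Your proposal is correct and follows the paper's route: Lemma \ref{simpleLemm3} makes $A$ a weighted permutation matrix, its cycles are shown to be $N$-cycles with weight product $d$, and the transposition conjugation of Lemma \ref{simpleLemm1}, transported to matrices via the $E(i,j)$ (the paper does this with its map $\phi$), gives the block form. The orientation comes out exactly as in $A_k$, with subdiagonal entries and a top-right corner, so no inversion of $\pi$ is needed. Your argument that a cycle of length $\ell\mid N$ forces $w^{N/\ell}=d$, hence $\ell=N$ for square-free $d\geq 2$, is a cleaner justification than the paper's for the step where it attributes the distinctness of the indices in the non-zero terms of $X^N$ to Lemma \ref{simpleLemm3}.
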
\begin{proof}
We construct $c_{1l}E_{ i_{2l}i_{1l}}+c_{2l}E_{i_{3l} i_{2l}}+\cdots+c_{Nl}E_{i_{1l} i_{Nl}}$ for each cycle $(i_{1l}i_{2l}\cdots i_{Nl})$, where $c_{1l}c_{2l}\cdots c_{Nl}=d$ for $1 \leq l\leq t$. It is evident that $(c_{1l}E_{ i_{2l}i_{1l}}+c_{2l}E_{i_{3l} i_{2l}}+\cdots+c_{Nl}E_{i_{1l} i_{Nl}})^N=c_{1l}c_{2l}\cdots c_{Nl}(E_{i_{1l}i_{1l}}+E_{i_{2l}i_{2l}}+\cdots+E_{i_{Nl}i_{Nl}})$. Let 
\begin{align*}
X=\sum\limits_{l=1}^t (c_{1l}E_{ i_{2l}i_{1l}}+c_{2l}E_{i_{3l} i_{2l}}+\cdots+c_{Nl}E_{i_{1l} i_{Nl}}),
\end{align*} 
it is clear that  $X^N=d(E_{11}+E_{22}+\cdots+E_{NtNt})=dI_{Nt}$. 

Next, we show that $X=\sum_{l=1}^t (c_{1l}E_{ i_{2l}i_{1l}}+c_{2l}E_{i_{3l} i_{2l}}+\cdots+c_{Nl}E_{i_{1l} i_{Nl}})$ constitutes all the solutions of equation $A^N=dI_n$.  
 From Lemma \ref{simpleLemm3}, we can write $$X=x_{\square1}E_{\square1}+x_{\square2}E_{\square2}+\cdot\cdot\cdot+x_{\square n}E_{\square n},$$  each $\square$ is a number from the sets $\{1,2,3,\dots,n\}$, they are all different, and it follows from Lemma \ref{simpleLemm2} that every number in a $\square$ cannot be the same as the number immediately following it. $X^N$ are sums of those non-zero terms $x_{\square i_1}x_{\square i_2}\cdots x_{\square i_N}E_{\square i_1}E_{\square i_2}\cdots E_{\square i_N}$, where $1\leq i_j \leq n$ for all $1\leq j \leq N$. All indices \( i_j \) must be distinct, which follows from the fact that if \( E_{\square i_m} = E_{\square i_n} \) for any \( 1 \leq m \neq n \leq N \), then \( x_{\square i_m} x_{\square i_n} = 0 \) by Lemma \ref{simpleLemm3}. Consequently, the only non-zero terms are of the form \( x_{i_N i_1} x_{i_1 i_2} \cdots x_{i_{N-1} i_N} E_{i_N i_1} E_{i_1 i_2} \cdots E_{i_{N-1} i_N} \), which implies that $x_{i_N i_1} \neq 0 $, $ x_{i_1 i_2} \neq 0,$ $\dots,$ \( x_{i_{N-1} i_N} \neq 0 \) and \( x_{i_N i_1} x_{i_1 i_2} \cdots x_{i_{N-1} i_N} = d \). Therefore, $X$ is the sum of the form 
$$x_{i_N i_1}E_{i_N i_1}+x_{i_1 i_2}E_{i_1 i_2}+\cdots+x_{i_{N-1} i_N}E_{i_{N-1} i_N},$$ 
for the cycle $(i_Ni_{N-1}i_{N-2}\cdots i_2i_1)$, where $x_{i_N i_1} x_{i_1 i_2} \cdots x_{i_{N-1} i_N} = d$. So $n=tN$ for some integer $t$.

We establish a mapping 
$
\phi: S_{Nt}\rightarrow M_{Nt}(\mathbb{Z}_{+}),
\nonumber
$
restrict the map to $\prod_{l=1}^{t}(i_{1l}i_{2l}\cdots i_{Nl})$, it is given by
$$
\phi(\prod\limits_{l=1}^{t}(i_{1l}i_{2l}\cdots i_{Nl}))=\sum\limits_{l=1}^t \phi(i_{1l}i_{2l}\cdots i_{Nl}),
$$
where $$\phi(i_{1l}i_{2l}\cdots i_{Nl})=c_{1l}E_{ i_{2l}i_{1l}}+c_{2l}E_{i_{3l} i_{2l}}+\cdots+c_{Nl}E_{i_{1l} i_{Nl}},$$
and $c_{1l}c_{2l}\cdots c_{Nl}=d$ for $1\leq l\leq t$.

From the previous paragraph, we can write $$X=\sum\limits_{l=1}^t \phi(i_{1l}i_{2l}\cdots i_{Nl})=\phi(\prod\limits_{l=1}^{t}(i_{1l}i_{2l}\cdots i_{Nl}))$$.

Next, we will use the same method as in Lemma \ref{simpleLemm1} for $\sum_{l=1}^t \phi(i_{1l}i_{2l}\cdots i_{Nl})
$.
\par
If $1$ is contains in $(i_{1l}i_{2l}\cdots i_{Nl})$ for some $1\leq l \leq t$, then we can rewrite the cycle as $(1i_{2l}\cdots i_{Nl})$.
If $i_{2l} =2$,  no action is taken; if $i_{jl}=2$ for some $3\leq j \leq N$, then $$E(2,i_{2l})[\cdots +\phi(1i_{2l}\cdots i_{Nl})+\cdots]E(2,i_{2l})^{-1}=[\cdots\phi(12\cdots )\cdots].$$ If $i_{jl}\neq 2$ for any $2\leq j \leq N$, and $2$ is contained in $(i_{1m}i_{2m}\cdots i_{Nm})$ for $m\neq l$,  we rewrite the cycle as $(2i_{2m}\cdots i_{Nm})$, then $$E(2,i_{2l})[\cdots +\phi(1i_{2l}\cdots i_{Nl})+\phi(2i_{2m}\cdots i_{Nm})+\cdots]E(2,i_{2l})^{-1}=[\cdots\phi(12i_{3l}\cdots i_{Nl})\cdots].$$

By continuing this process for the set $\{3,4,5,\dots,Nt\}$, we obtain the following equations
\begin{align*}
&[E(i_s,j_s)\cdots E(i_1,j_1)][\sum\limits_{l=1}^t \phi(i_{1l}i_{2l}\cdots i_{Nl})][E(i_s,j_s)\cdots E(i_1,j_1)]^{-1}\\
&=\sum\limits_{k=1}^t \phi((N(k-1)+1)(N(k-1)+2)\cdots(Nk))\\
&=\phi(\prod\limits_{k=1}^{t}((N(k-1)+1)(N(k-1)+2)\cdots(Nk)))\\
&=\phi([(i_s j_s)\cdots(i_1 j_1)][\prod\limits_{l=1}^{t}(i_{1l}i_{2l}\cdots i_{Nl})][(i_s j_s) \cdots(i_1 j_1)]^{-1}).
\end{align*}
These equations indicate that the procedure outlined here is entirely analogous to that described in Lemma \ref{simpleLemm1}.

Reorder the coefficients $c_{ij}$, we eventually obtain the form
$$
\sum\limits_{k=1}^t \phi((N(k-1)+1)(N(k-1)+2)\cdots(Nk))=\begin{bmatrix}
X_{1} &  &  & \\
 & X_{2} &  &\\
 &  & \ddots &\\
 &  &  & X_{t}
\end{bmatrix},
$$
where
$X_k=
\begin{bmatrix}
0 & \cdots & 0 & c_{Nk}\\
c_{1k} & \cdots & 0 & 0\\
\vdots & \ddots & \vdots &\vdots\\
0 & \cdots &c_{(N-1)k} & 0
\end{bmatrix}$ for $1\leq k\leq t$.
Thus, there exists a permutation matrix
 $P=E(i_1,j_1)\cdots E(i_s,j_s)$ such that 
$$PXP^{-1}=P(\sum\limits_{l=1}^t \phi(i_{1l}i_{2l}\cdots i_{Nl}))P^{-1}=
\begin{bmatrix}
X_{1} &  &  & \\
 & X_{2} &  &\\
 &  & \ddots &\\
 &  &  & X_{t}
\end{bmatrix}.$$

This completes the proof of the theorem.
\end{proof}

\begin{rema}\label{remark1}
If $d=1$, the theorem above is also correct and reduces to Lemma \ref{simpleLemm1}, by letting
$$\phi(i_{1l}i_{2l}\cdots i_{Nl})=E_{ i_{2l}i_{1l}}+E_{i_{3l} i_{2l}}+\cdots+E_{i_{1l} i_{Nl}},$$
for all $1\leq l\leq t$.

\end{rema}

Let \( d = p_1 p_2 \cdots p_m \geq 2 \) be a positive square-free integer. We construct an irreducible 
\(\mathbb{Z}_+\)-module of \(\mathbb{Z}[d^{\frac{1}{N}}]\) as follows
\begin{align*}
&\mathbb{Z} \langle \prod\limits_{i=1}^{N-1}c_{N-i}^{\frac{i}{N}},\frac{1}{c_{1}}d^{\frac{1}{N}}\prod\limits_{i=1}^{N-1}c_{N-i}^{\frac{i}{N}},\frac{1}{c_1c_2}d^{\frac{2}{N}}\prod\limits_{i=1}^{N-1}c_{N-i}^{\frac{i}{N}} ,\dots,\frac{1}{c_1c_2\cdots c_{N-1}}d^{\frac{N-1}{N}}\prod\limits_{i=1}^{N-1}c_{N-i}^{\frac{i}{N}} \rangle 
\\= &\Z\langle 
c^{\frac{1}{N}}_{N-1}\cdots c^{\frac{N-2}{N}}_{2} c^{\frac{N-1}{N}}_{1},c^{\frac{1}{N}}_{N}\cdots c^{\frac{N-2}{N}}_{3} c^{\frac{N-1}{N}}_{2},c^{\frac{1}{N}}_{1}\cdots c^{\frac{N-2}{N}}_{4} c^{\frac{N-1}{N}}_{3},\dots,c^{\frac{1}{N}}_{N-2}\cdots c^{\frac{N-2}{N}}_{1} c^{\frac{N-1}{N}}_{N}
\rangle,
\end{align*}
where \( c_i \) are positive divisors of \( d \) for \( 1 \leq i \leq N \) and satisfy \( c_1 c_2 \cdots c_N = d \). 
\begin{lemm}\label{simpleLemm4} Let $\Pi \subset S_N$ denote the subgroup of permutations $\sigma \in S_N$ fixing $N$ (i.e.,$\sigma(N) = N$). For each $\sigma \in \Pi$, we apply $\sigma$ to the lower indices of the generators in the above $\mathbb{Z}_+$-module. Let $j$ denote the number of trivial divisors $c_i=1$ in the $N$-tuple 
$(c_1,\dots,c_{N-1},c_N)$. Then, under these permutation actions, the images generate exactly $\frac{(N-1)!}{j!}$ non-isomorphic irreducible 
$\mathbb{Z}_+$-modules.
\begin{proof} 
It is easy to see that two $\mathbb{Z}_{+}\text-$modules of the above form are isomorphic if and only if they are equal as free $\mathbb{Z}\text-$modules. 
 
Without loss of generality, we assume \( c_N \neq 1 \). It is easy to see that the first term of the basis determines the entire basis. The number of distinct images of \( c^{\frac{1}{N}}_{\sigma(N-1)} \cdots c^{\frac{N-2}{N}}_{\sigma(2)} c^{\frac{N-1}{N}}_{\sigma(1)} \) corresponds to the number of permutations of \( c_1, \dots, c_{N-1} \). Given that there are \( j \) elements equal to $1$ among \( c_1, \dots, c_{N-1} \), where \( 1 \leq j \leq N-1 \),  the number of distinct images of \( c^{\frac{1}{N}}_{\sigma(N-1)} \cdots c^{\frac{N-2}{N}}_{\sigma(2)} c^{\frac{N-1}{N}}_{\sigma(1)} \) under all permutations \( \sigma \) is \( \frac{(N-1)!}{j!} \), where \( \sigma \) ranges over all elements of the group \( \Pi \). Consequently, there are \( \frac{(N-1)!}{j!} \) non-isomorphic irreducible \(\mathbb{Z}_+\)-modules. 
\end{proof}

\end{lemm}

\begin{rema}\label{remark1}
Since two $\mathbb{Z}_{+}\text-$modules of the above form are isomorphic if and only if they are equal as free $\mathbb{Z}\text-$modules, it is convenient to order the bases by setting $c_N=\max\{c_1,\dots,c_N\}$. \end{rema}

\begin{exam}\label{ex1}
 Let $N=4$, $d=6$ and $(c_1,c_2,c_3,c_4)=(1,1,2,3)$. Then $\Pi=\{(1),(123),(132)\}$. Denote $\sigma=(123)$ and $\tau=(132)$. By the above lemma there are $\frac{(4-1)!}{2!}=3$ non-isomorphic irreducible \(\mathbb{Z}_+\)-modules, and they are
\begin{align*}
(c_1,c_2,c_3,c_4)=(1,1,2,3) \leftrightarrow \mathbb{Z}\langle 
2^{\frac{1}{4}},3^{\frac{1}{4}}2^{\frac{2}{4}},3^{\frac{2}{4}}2^{\frac{3}{4}},3^{\frac{3}{4}}
\rangle,\\
(c_{\sigma(1)},c_{\sigma(2)},c_{\sigma(3)},c_{\sigma(4)})=(1,2,1,3)\leftrightarrow \mathbb{Z}\langle 
2^{\frac{2}{4}},3^{\frac{1}{4}}2^{\frac{3}{4}},3^{\frac{2}{4}},2^{\frac{1}{4}}3^{\frac{3}{4}}
\rangle,\\
(c_{\tau(1)},c_{\tau(2)},c_{\tau(3)},c_{\tau(4)})=(2,1,1,3)\leftrightarrow \mathbb{Z}\langle 
2^{\frac{3}{4}},3^{\frac{1}{4}},2^{\frac{1}{4}}3^{\frac{2}{4}},2^{\frac{2}{4}}3^{\frac{3}{4}}
\rangle.
\end{align*}
\end{exam}

The following lemma is a standard result in combinatorial mathematics, and we refer the reader to \cite{CPJ}.

\begin{lemm}\label{simpleLemm5}Let $(N)_k:=N(N-1)\cdots(N-k+1)$, $S(m,k):=\frac{1}{k!}\sum_{j=0}^k(-1)^{k-j}\tbinom{k}{j}j^m$, where $N,k,m$ are positive integers and $m\geq k.$ Then  $N^m=\sum_{k=1}^mS(m,k)(N)_k$.  $S(m,k)$ is the Stirling number of the second kind, which is the number of partitions of  $\{1,2,3,\dots,m\}$ with $k$ non-empty parts. 
\end{lemm}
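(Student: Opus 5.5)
Since the paper cites \cite{CPJ} for this identity, I would nonetheless give a short self-contained double-counting argument with two parts: first the combinatorial meaning of $S(m,k)$, then the identity $N^m=\sum_k S(m,k)(N)_k$.

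\textbf{The identity.} Count all maps $f:\{1,\dots,m\}\to\{1,\dots,N\}$. There are $N^m$ of them. Sort each $f$ by the partition of $\{1,\dots,m\}$ into its nonempty fibres $f^{-1}(y)$. If this partition has $k$ blocks, then $f$ is exactly the assignment of $k$ distinct values in $\{1,\dots,N\}$ to these blocks, and there are $(N)_k$ injective ways to do this. Conversely, every partition into $k$ blocks together with such an injective labelling gives a unique map $f$. Writing $P(m,k)$ for the number of partitions of $\{1,\dots,m\}$ into $k$ nonempty parts, we get $N^m=\sum_{k=1}^m P(m,k)(N)_k$. This holds for $N\ge1$; when $k>N$ we have $(N)_k=0$, so those terms vanish.

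\textbf{The formula for $S(m,k)$.} It remains to show $P(m,k)=\frac{1}{k!}\sum_{j=0}^k(-1)^{k-j}\binom kj j^m$. Count surjections $\{1,\dots,m\}\to\{1,\dots,k\}$ by inclusion--exclusion over the set of missed values. Let $T$ be the set of values allowed in the image, with $|T|=j$. Maps into $T$ number $j^m$, and each excluded value contributes a sign, giving the factor $(-1)^{k-j}$. Hence
\[
\#\{\text{surjections}\}=\sum_{j=0}^k(-1)^{k-j}\binom kj j^m .
\]
Each surjection corresponds to a partition into $k$ nonempty blocks together with a bijective labelling of the blocks by $\{1,\dots,k\}$, and there are $k!$ such labellings. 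So the number of surjections is $k!\,P(m,k)$, which gives $P(m,k)=S(m,k)$.

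The only point needing care is the inclusion--exclusion sign bookkeeping. Alternatively, apply the identity above with $N=1,\dots,k$ and invert the resulting triangular system; I expect neither route to present real difficulty.
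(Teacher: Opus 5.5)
Your proof is correct. The paper does not prove this lemma at all: it calls it a standard fact and cites \cite{CPJ}. So your proposal is not a competing route but a self-contained replacement for the citation, and it is the usual textbook argument.

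\begin{itemize}
\item \textbf{The identity.} Sorting maps $\{1,\dots,m\}\to\{1,\dots,N\}$ by their fibre partition gives $N^m=\sum_{k=1}^m P(m,k)(N)_k$ for every positive $N$. Keep your remark that partitions with $k>N$ blocks contribute $(N)_k=0$. Corollary \ref{resultonZ[d]} uses the identity both for $m<N$ and for $m\ge N$. In the case $m\ge N$ it relies on exactly these vanishing terms when it extends $\sum_{k=1}^{N}$ to $\sum_{k=1}^{m}$.
\item \textbf{The formula for $S(m,k)$.} The surjection count correctly identifies $P(m,k)$ with the alternating sum. The $j=0$ term is harmless, since $0^m=0$ for $m\ge1$.
\item \textbf{Tightening the sign step.} Your inclusion--exclusion is currently described in words. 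To make it rigorous, let $s(U)$ be the number of maps with image exactly $U$, and write $|T|^m=\sum_{U\subseteq T}s(U)$. Inverting over the subset lattice gives $s(\{1,\dots,k\})=\sum_{T}(-1)^{k-|T|}|T|^m$, and grouping by $|T|=j$ produces the factor $\binom kj$.
\item \textbf{Your alternative.} Inverting a triangular system is cleanest phrased as binomial inversion. Since $(N)_k=\binom Nk k!$, the identity reads $N^m=\sum_k\binom Nk\,k!\,P(m,k)$ for all $N\ge0$. Inverting gives $k!\,P(m,k)=\sum_j(-1)^{k-j}\binom kj j^m$ directly.
\end{itemize}
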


\begin{coro}\label{resultonZ[d]}Let \( d = p_1 p_2 \cdots p_m \geq 2 \) be a positive square-free integer. Then, any irreducible \(\mathbb{Z}_+\)-module of \(\mathbb{Z}[d^{\frac{1}{N}}]\) is isomorphic to some
\[
\mathbb{Z} \langle c^{\frac{1}{N}}_{N-1} \cdots c^{\frac{N-2}{N}}_{2} c^{\frac{N-1}{N}}_{1}, c^{\frac{1}{N}}_{N} \cdots c^{\frac{N-2}{N}}_{3} c^{\frac{N-1}{N}}_{2}, \dots, c^{\frac{1}{N}}_{N-2} \cdots c^{\frac{N-2}{N}}_{1} c^{\frac{N-1}{N}}_{N} \rangle,
\]
where \( c_i \) are positive divisors of \( d \) for \( 1 \leq i \leq N \), and satisfy \( c_1 c_2 \cdots c_N = d \). Consequently, \(\mathbb{Z}[d^{\frac{1}{N}}]\) has exactly \( N^{m-1} \) irreducible \(\mathbb{Z}_+\)-modules up to isomorphism.
\end{coro}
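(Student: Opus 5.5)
The plan has two parts: reduce to Theorem \ref{oned}, then count. A $\mathbb{Z}_+$-module $M$ of rank $n$ over $\mathbb{Z}[d^{\frac{1}{N}}]$ is determined by the non-negative integer matrix $A$ by which $x=d^{\frac{1}{N}}$ acts. The other basis elements $d^{\frac{i}{N}}$ act by $A^i$, and $A^N=dI_n$ because $x^N=d\cdot 1$. If $n=1$, then $A=(a)$ with $a^N=d$, which is impossible since $d\geq2$ is square-free. So $n\geq 2$, and Theorem \ref{oned} applies.

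By Remark \ref{transfer}, after conjugating by a permutation matrix we may assume $A$ is block diagonal with blocks $A_1,\dots,A_t$ of the cyclic shape given there. Each block spans a $\mathbb{Z}_+$-submodule, since it is stable under $A$ and hence under every $A^i$. Irreducibility therefore forces $t=1$, so $n=N$ and $A=A_1$ with entries $c_1,\dots,c_N$ and $c_1\cdots c_N=d$. Conversely, a single cycle block gives an irreducible module: any nonempty $A$-stable subset of basis vectors is carried around the whole $N$-cycle by $A$, because every $c_i>0$.

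Next I identify this matrix module with the explicit lattice. Write $b_1=\prod_{i=1}^{N-1}c_{N-i}^{\frac{i}{N}}$ and $b_{k+1}=\frac{1}{c_1\cdots c_k}d^{\frac{k}{N}}b_1$. A direct computation gives $d^{\frac{1}{N}}b_k=c_k b_{k+1}$ for $k<N$ and $d^{\frac{1}{N}}b_N=c_N b_1$, which is exactly the action of $A_1$. The step to check is that each $b_k$ is the product of $c$-roots displayed in the statement. This follows by writing $d=c_1\cdots c_N$ and shifting the exponent of each $c_j$ by $\frac{k}{N}$ modulo $1$, which at the same time shows the action is by non-negative integers. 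Thus every irreducible module is isomorphic to one of the listed lattices.

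The remaining task, and the main obstacle, is the count $N^{m-1}$. By Remark \ref{transfer}, two cycle blocks give isomorphic modules exactly when their $N$-tuples $(c_1,\dots,c_N)$ differ by a cyclic rotation. Conjugating by the permutation matrix of the cycle $(12\cdots N)$ rotates the tuple. For the converse, a permutation matrix conjugating one single-cycle matrix to another must respect the cycle structure, so it acts as a power of the cycle on indices. Because the $p_j$ are distinct primes and each divides exactly one $c_i$, a tuple is the same as a function $f:\{p_1,\dots,p_m\}\to\mathbb{Z}/N$. There are $N^m$ such functions, and the rotation group $\mathbb{Z}/N$ acts on them by translation. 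This action is free: $\prod c_i=d\neq1$ means some $p_j$ exists, and a rotation of order $>1$ fixing $f$ would force $p_j$ to lie in several different $c_i$, contradicting square-freeness. So there are $N^m/N=N^{m-1}$ orbits. As a cross-check against the paper's route, I would also sum the orbit counts from Lemma \ref{simpleLemm4} by grouping tuples according to which positions are nontrivial. This gives $\sum_k S(m,k)(N)_k/N$, which equals $N^{m-1}$ by Lemma \ref{simpleLemm5}.
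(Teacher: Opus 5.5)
Your proposal is correct. The reduction step matches the paper's: Theorem \ref{oned} plus irreducibility forces a single cycle block, and the explicit vectors $b_k$ with $d^{\frac1N}b_k=c_kb_{k+1}$ (indices mod $N$) realize that block as the displayed lattice.

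The counting, however, takes a genuinely different route. The paper proceeds in four steps:
\begin{enumerate}
\item it normalizes each isomorphism class by $c_N=\max\{c_1,\dots,c_N\}$;
\item it counts the $(N-1)!/j!$ arrangements of the remaining entries when $j$ of them equal $1$ (Lemma \ref{simpleLemm4});
\item it multiplies by the $S(m,k)$ unordered factorizations with $k=N-j$ nontrivial factors;
\item it sums using $N^m=\sum_k S(m,k)(N)_k$ (Lemma \ref{simpleLemm5}).
\end{enumerate}
You instead identify ordered factorizations with functions $\{p_1,\dots,p_m\}\to\mathbb{Z}/N$. Rotation acts on these by translation, hence freely once $m\geq 1$, so the count $N^m/N$ is immediate and no Stirling-number identity is needed.

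Your version also proves things the paper only asserts or assumes:
\begin{itemize}
\item the isomorphism criterion, via the fact that a permutation conjugating one single-cycle matrix to another must centralize the $N$-cycle and hence be a rotation (the paper just says modules of this form are isomorphic iff they are equal as free $\mathbb{Z}$-modules);
\item the irreducibility of each single-cycle module, which the paper takes for granted;
\item the rank-one case, which Lemmas \ref{simpleLemm2} and \ref{simpleLemm3} exclude by hypothesis.
\end{itemize}

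In exchange, the paper's route gives a finer stratification: exactly $S(m,k)(N)_k/N$ classes have $k$ nontrivial $c_i$. It also supplies the normalization $c_N=\max$ that is reused later in Lemma \ref{simpleLemm6} and Theorem \ref{TypeRank}.
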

 \begin{proof}Let $M$ be an arbitrary irreducible $\mathbb{Z}_{+}\text-$module of 
$\mathbb{Z}[d^{\frac{1}{N}}]$ with $\mathbb{Z}_{+}\text-$basis $\{m_{1},\dots, m_{n}\}$. Let $A$ be the non-negative matrix determined by $d^{\frac{1}{N}}$. Then it follows from Theorem \ref{oned} that there exists a permutation matrix $P$ such that $PAP^{-1}$ is a block diagonal matrix. Then $M$ can be decomposed as a direct sum of $\mathbb{Z}_{+}\text-$submodules of rank $N$ by Remark \ref{transfer}. However, $M$ is irreducible, so rank$(M)=N$ and $PAP^{-1}=\begin{bmatrix}
0 & \cdots & 0 & c_N\\
c_1 & \cdots & 0 & 0\\
\vdots & \ddots & \vdots &\vdots\\
0 & \cdots &c_{N-1} & 0
\end{bmatrix}$, where $c_i$ are positive divisors of $d$ for $1\leq i\leq N$ and $c_1c_2\cdot \cdot \cdot c_{N}=d$. 

Let
$$v:=\prod\limits_{i=1}^{N-1}c_{N-i}^{\frac{i}{N}}=c^{\frac{1}{N}}_{N-1}\cdots c^{\frac{N-2}{N}}_{2} c^{\frac{N-1}{N}}_{1},$$
and
\begin{align*}
[B]:=&(v,\frac{1}{c_{1}}d^{\frac{1}{N}}v,\frac{1}{c_1c_{2}}d^{\frac{2}{N}}v,\dots ,\frac{1}{c_1c_2\cdots c_{N-1}}d^{\frac{N-1}{N}}v)\\
=&(c^{\frac{1}{N}}_{N-1}\cdots c^{\frac{N-2}{N}}_{2} c^{\frac{N-1}{N}}_{1},c^{\frac{1}{N}}_{N}\cdots c^{\frac{N-2}{N}}_{3} c^{\frac{N-1}{N}}_{2},\dots,c^{\frac{1}{N}}_{N-2}\cdots c^{\frac{N-2}{N}}_{1} c^{\frac{N-1}{N}}_{N}),
\end{align*}
Notice that 
\begin{align*}
d^{\frac{1}{N}}[B]=
[B]
\begin{bmatrix}
0 & \cdots & 0 & c_N\\
c_1 & \cdots & 0 & 0\\
\vdots & \ddots & \vdots &\vdots\\
0 & \cdots &c_{N-1} & 0
\end{bmatrix}.
\end{align*}
Hence, by definition,
\begin{align*}
M \cong \mathbb{Z}\langle 
c^{\frac{1}{N}}_{N-1}\cdots c^{\frac{N-2}{N}}_{2} c^{\frac{N-1}{N}}_{1},c^{\frac{1}{N}}_{N}\cdots c^{\frac{N-2}{N}}_{3} c^{\frac{N-1}{N}}_{2},\dots,c^{\frac{1}{N}}_{N-2}\cdots c^{\frac{N-2}{N}}_{1} c^{\frac{N-1}{N}}_{N}
\rangle.
\end{align*}

If $m\geq N$,
when there are exactly $j$ elements equal to $1$ with $0\leq j\leq N-1$, there are $\frac{(N-1)!}{j!}=\frac{(N)_{N-j}}{N}$ different $\mathbb{Z}_{+}\text-$modules. The number of ways to divide the $m$ prime numbers into $N-j$ pairwise disjoint non empty sets is
$S(m,N-j)$ follows from Lemma \ref{simpleLemm5}. Let $k=N-j$, then $1\leq k\leq N$. In this case, the number of irreducible $\mathbb{Z}_{+}\text-$modules is $S(m,k)\frac{(N-1)!}{j!}=S(m,k)\frac{(N)_k}{N}$.
\par
Notice that $(N)_{N-1}=(	N)_{N}$ and $(N)_l=0$, when $l\textgreater N$. Thus, the total number of irreducible $\mathbb{Z}_{+}\text-$modules is $\sum_{k=1}^{N}S(m,k)\frac{(N)_k}{N}=\sum_{k=1}^mS(m,k)\frac{(N)_k}{N}=\frac{N^m}{N}=N^{m-1}$. 
\par
	If $m\textless N$,
when there are exactly $j$ elements equal to $1$, then $N-m\leq j\leq N-1$, there are $\frac{(N-1)!}{j!}=\frac{(N)_{N-j}}{N}$ different $\mathbb{Z}_{+}\text-$module. The number of ways to divide the $m$ prime numbers into $N-j$ pairwise disjoint non empty sets is
$S(m,N-j)$ follows from Lemma \ref{simpleLemm5}. Let $k=N-j$, then $1\leq k\leq m$. In this case, the number of irreducible $\mathbb{Z}_{+}\text-$modules is $S(m,N-j)\frac{(N)_{N-j}}{N}=S(m,k)\frac{(N)_k}{N}$.
\par
Thus, the total number of irreducible $\mathbb{Z}_{+}\text-$modules is $\sum\limits_{k=1}^mS(m,k)\frac{(N)_k}{N}=\frac{N^m}{N}=N^{m-1}$. 
\end{proof}

\begin{exam}
 Let $N=4$ and $d=6$. As shown in Example \ref {ex1}, the decomposition $6=1\times1\times2\times3$ correspond three non-isomorphic irreducible \(\mathbb{Z}_+\)-modules.
 And, the decomposition $6=1\times1\times1\times6$ correspond exactly one irreducible \(\mathbb{Z}_+\)-module:
$
\mathbb{Z}\langle 
1,6^{\frac{1}{4}},6^{\frac{2}{4}},6^{\frac{3}{4}}
\rangle.
$
So $\mathbb{Z}[6^{\frac{1}{4}}]$ has exactly four irreducible \(\mathbb{Z}_+\)-modules up to isomorphism.
\end{exam}

The following lemma is highly analogous to Schur's lemma and plays a crucial role in our subsequent proof.

\begin{lemm}\label{simpleLemm6} Let $d_1,d_2$ be coprime square-free positive integers and $A\in M_N(\mathbb{Z}_{+})$. Assume the matrices $B_1$ and $B_2$ are defined as follows
\begin{align*}
B_1=
\begin{bmatrix}
0 & \cdots & 0 & c_N\\
c_1 & \cdots & 0 & 0\\
\vdots & \ddots & \vdots &\vdots\\
0 & \cdots &c_{N-1} & 0
\end{bmatrix}
,
B_2=
\begin{bmatrix}
0 & \cdots & 0 & c^{\prime}_N\\
c^{\prime}_1 & \cdots & 0 & 0\\
\vdots & \ddots & \vdots &\vdots\\
0 & \cdots &c^{\prime}_{N-1} & 0
\end{bmatrix},
\end{align*}
where $c_1c_2\cdot \cdot \cdot c_{N}=d_1,c^{\prime}_1c^{\prime}_2\cdot \cdot \cdot c^{\prime}_{N}=d_1$ and $c_{N}=max\{c_1,\dots,c_N\}, c^{\prime}_{N}=max\{c^{\prime}_1,\dots,c^{\prime}_N\}$. 
If $B_1 A=A B_2$ and the non-zero elements of $A$ are divisors of $d_2$, then $A=0$ if $B_1 \neq B_2$, and $A$ is a scalar matrix if $B_1=B_2$.
           \begin{proof}Let \( A = (a_{ij})_{1 \leq i,j \leq N} \). If \( B_1 = B_2 \), then \( B_1 A = A B_2 \) implies the system of equations
\[
c_1 a_{11} = c_1 a_{22}, \quad c_2 a_{22} = c_2 a_{33}, \quad \dots, \quad c_N a_{NN} = c_N a_{11},
\]
which imply that all diagonal elements \( a_{ii} \)  are equal for  \( 1 \leq i \leq N \). Let \( k = a_{ii} \)  for all \( 1 \leq i \leq N \) . We then obtain the matrix equation
\[
\begin{bmatrix}
c_N a_{N1} & \cdots & c_N a_{N(N-1)} & c_N k \\
c_1 k & \cdots & c_1 a_{1(N-1)} & c_1 a_{1N} \\
\vdots & \ddots & \vdots & \vdots \\
c_{N-1} a_{(N-1)1} & \cdots & c_{N-1} k & c_{N-1} a_{(N-1)N}
\end{bmatrix}
=
\begin{bmatrix}
c_1 a_{12} & \cdots & c_{N-1} a_{1N} & c_N k \\
c_1 k & \cdots & c_{N-1} a_{2N} & c_N a_{21} \\
\vdots & \ddots & \vdots & \vdots \\
c_1 a_{N2} & \cdots & c_{N-1} k & c_N a_{N1}
\end{bmatrix}.
\]
By comparing the last column of the two matrices, we have
\[
c_1 a_{1N} = c_N a_{21}, \quad c_2 a_{2N} = c_N a_{31}, \quad \dots, \quad c_{N-1} a_{(N-1)N} = c_N a_{N1}.
\]
As \( c_N \neq c_i \) for \( 1 \leq i \leq N-1 \), and $d_1,d_2$ are coprime to each other, it follows that \( a_{i1} = a_{jN} = 0 \) for \( i \neq 1 \) and \( j \neq N \). Consequently, \( A \) takes the form
\[
\begin{bmatrix}
k & a_{12} & \cdots & a_{1(N-1)} & 0 \\
0 & k & \cdots & a_{2(N-1)} & 0 \\
\vdots & \vdots & \ddots & \vdots & \vdots \\
0 & a_{(N-1)2} & \cdots & k & 0 \\
0 & a_{N2} & \cdots & a_{N(N-1)} & k
\end{bmatrix}.
\]
The equation \( B_1 A = A B_1 \) then becomes
\[
\begin{bmatrix}
0 & \cdots & c_N a_{N(N-1)} & c_N k \\
c_1 k & \cdots & c_1 a_{1(N-1)} & 0 \\
\vdots & \vdots & \vdots & \vdots \\
0 & \cdots & c_{N-2} a_{(N-2)(N-1)} & 0 \\
0 & \cdots & c_{N-1} k & 0
\end{bmatrix}
=
\begin{bmatrix}
c_1 a_{12} & \cdots & 0 & c_N k \\
c_1 k & \cdots & 0 & 0 \\
\vdots & \vdots & \vdots & \vdots \\
c_1 a_{(N-2)2} & \cdots & 0 & 0 \\
c_1 a_{N2} & \cdots & c_{N-1} k & 0
\end{bmatrix}.
\]
By comparing the first and second-to-last columns of the two matrices, we find that \( a_{i2} = a_{j(N-1)} = 0 \) for \( i \neq 2 \) and \( j \neq N-1 \). Thus, \( A \) can be simplified to
\[
\begin{bmatrix}
k & 0 & a_{13} & \cdots & a_{1(N-2)} & 0 & 0 \\
0 & k & a_{23} & \cdots & a_{2(N-2)} & 0 & 0 \\
0 & 0 & k & \cdots & a_{3(N-2)} & 0 & 0 \\
\vdots & \vdots & \vdots & \ddots & \vdots & \vdots \\
0 & 0 & a_{(N-2)3} & \cdots & k & 0 & 0 \\
0 & 0 & a_{(N-1)3} & \cdots & a_{(N-1)(N-2)} & k & 0 \\
0 & 0 & a_{N3} & \cdots & a_{N(N-2)} & 0 & k
\end{bmatrix}.
\]
Continuing this process, we know that \( a_{ij} = 0 \) for \( i \neq j \). Therefore, \( A \) is a scalar matrix.

If \( B_1 \neq B_2 \), it follows that there exist coefficients \( c_i \) and \( c'_i \) such that \( c_i \neq c'_i \) for  some \( 1 \leq i \leq N \). The equations \( c_i a_{ii} = c'_i a_{(i+1)(i+1)} \) show that \( a_{ii} = a_{(i+1)(i+1)} = 0 \), so implying that all diagonal elements of matrix \( A \) are zero. If \( c'_N \geq c_N \), the method previously outlined is applicable. If \( c'_N < c_N \), the same method can be employed, with the modification of interchanging columns and rows. Consequently, it follows that \( a_{ij} = 0 \) for all \( 1 \leq i, j \leq N \).
           \end{proof}

\end{lemm}

\begin{theo}\label{TypeRank}
Assume that $d_1,d_2,\dots,d_r$ are pairwise relatively prime, positive, square-free integers and $d_j\geq2$ for all $1\leq j \leq r$. Let $M$ be an irreducible $\mathbb{Z}_{+}\text-$module of $R=\mathbb{Z}[d^{\frac{1}{N_1}}_1,d^{\frac{1}{N_2}}_2,\dots,d^{\frac{1}{N_r}}_r]$, where $N_i\geq2$ for all $1\leq i \leq r$. Then\begin{enumerate}
   \item rank$(M)=N_1N_2\cdots N_r$.
   \item for any $1\leq j \leq r$, let $\widehat{N_j}:=\frac{N_1N_2\cdots N_r}{N_j}$ , then $M\cong \bigoplus  ^{\widehat{N_j}}_{i=1}M_i$ as $\mathbb{Z}_{+}\text-$module of  $R=\mathbb{Z}[d^{\frac{1}{N_j}}_j]$. Moreover, there exist positive divisors $c_{1j},c_{2j},\dots,c_{N_jj}$ such that $c_{1j}c_{2j}\cdot \cdot \cdot c_{N_jj}=d_j$, and for all $1\leq i \leq \widehat{N_j}$,
\par
 $M_i\cong\mathbb{Z}\langle 
c^{\frac{1}{N_j}}_{(N_j-1)j}\cdots c^{\frac{N_j-2}{N_j}}_{2j} c^{\frac{N_j-1}{N_j}}_{1j},c^{\frac{1}{N_j}}_{N_jj}\cdots c^{\frac{N_j-2}{N_j}}_{3j} c^{\frac{N_j-1}{N_j}}_{2j},\dots,c^{\frac{1}{N_j}}_{(N_j-2)j}\cdots c^{\frac{N_j-2}{N_j}}_{1j} c^{\frac{N_j-1}{N_j}}_{N_jj}
\rangle$.
    \end{enumerate}

\end{theo}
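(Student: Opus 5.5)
We write $A_1,\dots,A_r$ for the matrices in $M_n(\Z_+)$ by which the generators $d_1^{1/N_1},\dots,d_r^{1/N_r}$ act on $M$, with $n=\rank(M)$. These matrices pairwise commute and satisfy $A_j^{N_j}=d_jI_n$. The plan is to prove part 2 first, for each fixed $j$, and then deduce the rank formula from irreducibility by a counting and structure argument.

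\emph{Step 1: part 2.} Fix $j$. Theorem \ref{oned} applied to $A_j$ gives $N_j\mid n$ and a permutation matrix $P$ with
\[
PA_jP^{-1}=\operatorname{diag}(X_1,\dots,X_t),\qquad t=n/N_j .
\]
Here each $X_k$ is a cyclic weighted block with entries $c_{1k},\dots,c_{N_jk}$ whose product is $d_j$. As in the proof of Corollary \ref{resultonZ[d]}, each block spans a $\Z_+$-submodule for $\Z[d_j^{1/N_j}]$ of the stated form. After cyclically relabeling inside a block we may assume $c_{N_jk}=\max_i c_{ik}$, as in Remark \ref{remark1}. What remains is to show that all blocks carry the same tuple, and that $t=\widehat{N_j}$; the latter is postponed to Step 3.

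\emph{Step 2: all blocks are the same.} Write $PA_lP^{-1}=(Y_{kk'})$ in $N_j\times N_j$ blocks, for $l\neq j$. Commutation gives $X_kY_{kk'}=Y_{kk'}X_{k'}$. Every row and column of $A_l$ has exactly one nonzero entry, by Lemma \ref{simpleLemm3}, and that entry divides $d_l$, which is coprime to $d_j$. Hence Lemma \ref{simpleLemm6} applies: $Y_{kk'}=0$ whenever $X_k\neq X_{k'}$, and $Y_{kk'}$ is scalar whenever $X_k=X_{k'}$. Group the blocks into classes by their tuple. The span of the basis vectors belonging to one class is then stable under every $A_l$, and also under $A_j$. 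Since $M$ is irreducible there is only one class, so $X_1=\dots=X_t$.

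\emph{Step 3: the rank.} By Step 2 each $PA_lP^{-1}$ has the form $B_l\otimes I_{N_j}$, where $B_l\in M_t(\Z_+)$ is a monomial matrix and the $B_l$ commute. Moreover $B_l^{N_l}=d_lI_t$, because $(B_l\otimes I)^{N_l}=d_lI$. For any fixed position $p$ inside a block, the basis vectors in position $p$ across all blocks span a $\Z_+$-module for $\Z[d_l^{1/N_l}:l\neq j]$ with action matrices $B_l$. A $\Z_+$-submodule of this smaller module, tensored with all $N_j$ positions, gives a $\Z_+$-submodule of $M$. So the smaller module is irreducible, and induction on $r$ gives $t=\prod_{l\neq j}N_l$. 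The base case $r=1$ is Corollary \ref{resultonZ[d]}, which gives rank $N_1$. Hence $\rank(M)=N_j\,t=N_1\cdots N_r$ and $t=\widehat{N_j}$, which completes part 2 as well.

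The main obstacle is Step 2. Lemma \ref{simpleLemm6} is stated only for the normalized form with $c_N=\max$. Making every block of $PA_jP^{-1}$ normalized simultaneously needs a further permutation that cyclically rotates inside blocks, and this rotation must also be applied to $A_l$. One has to check that such a rotation preserves both the commutation relations and the monomial, divisor-of-$d_l$ form of $A_l$. Both follow because conjugation by a permutation preserves these properties, so the obstacle should be bookkeeping rather than a real gap.
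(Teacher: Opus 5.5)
Your proof is correct. Steps 1--2 are exactly the paper's argument: Theorem \ref{oned}, cyclic normalization so that $c_{N_jk}$ is the maximum, Lemma \ref{simpleLemm6}, and irreducibility forcing a single class of blocks.

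The genuine difference is how you get the rank. The paper does not induct on $r$; it proves two separate bounds.
\begin{itemize}
\item For the upper bound, it notes that every $\Z_+$-basis monomial of $R$ acts by a matrix with exactly one nonzero entry in each row and column. So the $\Z_+$-submodule $(m_1)$ contains at most $\rank(R)=N_1\cdots N_r$ basis vectors, and irreducibility gives $M=(m_1)$.
\item For the lower bound, it uses that the matrices of $d_i^{1/N_i}$, $i\ge 2$, have scalar blocks (zero on the block diagonal) relative to $D_1$. It then argues that every product $c_{i_11}c_{i_22}\cdots c_{i_rr}$ occurs as an entry of the matrix of $d_1^{1/N_1}\cdots d_r^{1/N_r}$.
\end{itemize}
You instead read off $PA_lP^{-1}=B_l\otimes I_{N_j}$ from Step 2. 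Restricting to a fixed position $p$ gives a $\Z_+$-module over $\Z[d_l^{1/N_l}:l\neq j]$ that inherits irreducibility from $M$, and you induct on $r$.

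Your route has three advantages. It yields the exact rank in one stroke. It is cleaner where the paper is thinnest: the paper passes from the appearance of certain values to $n\ge N_1\cdots N_r$, which needs those values to sit in distinct positions, although as numbers they can coincide (e.g.\ when several $c_{i1}=1$). It also already exhibits $M$ as the tensor product of $M^{(p)}$ with one $\Z[d_j^{1/N_j}]$-block, which the paper only reaches in Theorem \ref{maintheorem} and Remark \ref{Example2}. The paper's upper-bound observation, in turn, is a one-line argument that needs none of the block structure.

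One small attribution point: that each nonzero entry of $A_l$ divides $d_l$ follows from Theorem \ref{oned}, or directly from $A_l^{N_l}=d_lI$ with $A_l$ monomial. It does not follow from Lemma \ref{simpleLemm3} alone.
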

\begin{proof}
For any \( 1 \leq j \leq r \), by Corollary \ref{resultonZ[d]}, we can find a \(\mathbb{Z}_+\)-basis for \( M \) such that the non-negative integer matrix determined by \( d_j^{\frac{1}{N_j}} \) is given by $
B = \begin{bmatrix}
B_1 &  &  & \\
 & B_2 &  &\\
 &  & \ddots &\\
 &  &  & B_g
\end{bmatrix},
$
where 
$
B_k = \begin{bmatrix}
0 & \cdots & 0 & c_{N_jk}\\
c_{1k} & \cdots & 0 & 0\\
\vdots & \ddots & \vdots &\vdots\\
0 & \cdots &c_{(N_j-1)k} & 0
\end{bmatrix},
$
\( c_{1k}\cdots c_{N_jk} = d_j, c_{N_jk} = \max\{c_{1k}, \dots, c_{N_jk}\} \) for all \( 1 \leq k \leq g \). Moreover, without loss of generality, we further  assume that $B_1,\dots,B_g=\underbrace{B_1,\dots,B_1}_{l_1},
\underbrace{B_2,\dots,B_2}_{l_2},\dots,\underbrace{B_s,\dots,B_s}_{l_s}$,          and $l_1+l_2+\dots +l_s=g$.  
 Assume the matrix determined by $d^{\frac{1}{N_l}}_l$ is $A=\begin{bmatrix}
A_{11} & A_{12} & \cdots & A_{1g}\\
A_{21} & A_{22} & \cdots & A_{2g}\\
\vdots & \vdots & \ddots &\vdots\\
A_{g1} & A_{g2} &\cdots & A_{gg}
\end{bmatrix}$, 
where $1\leq l\neq j \leq r$. Because $R$ is commutative, $d^{\frac{1}{N_j}}_j d^{\frac{1}{N_l}}_l=d^{\frac{1}{N_l}}_ld^{\frac{1}{N_j}}_j$ implies $AB=BA$. Hence, we deduce from Lemma \ref{simpleLemm6} that there exists matrices $A_1,\dots,A_s$ such that $A=\begin{bmatrix}
A_{1} & 0 & \cdots & 0\\
0 & A_{2} & \cdots & 0\\
\vdots & \vdots & \ddots &\vdots\\
0 & 0 &\cdots & A_{s}
\end{bmatrix}
$.

Notice that the matrices determined by $d^{\frac{1}{N_k}}_k(1\leq k\neq j \leq r)$ all have the same shape as $A$, hence $s=1$, otherwise $M$ is a direct sum of non-trivial irreducible $\mathbb{Z}_{+}\text-$modules of $R$, which contradicts the fact that $M$ is irreducible. That is to say, as a $\mathbb{Z}_{+}\text-$module of $\mathbb{Z}[d_j^{\frac{1}{N_j}}]$, $M$ is isomorphic to a direct sum of 
\begin{align*}
\mathbb{Z}\langle 
c^{\frac{1}{N_j}}_{(N_j-1)j}\cdots c^{\frac{N_j-2}{N_j}}_{2j} c^{\frac{N_j-1}{N_j}}_{1j},c^{\frac{1}{N_j}}_{N_jj}\cdots c^{\frac{N_j-2}{N_j}}_{3j} c^{\frac{N_j-1}{N_j}}_{2j},\dots,c^{\frac{1}{N_j}}_{(N_j-2)j}\cdots c^{\frac{N_j-2}{N_j}}_{1j} c^{\frac{N_j-1}{N_j}}_{N_jj}
\rangle,
\end{align*}
 for some positive divisors $c_{1j},c_{2j},\dots,c_{N_jj}$ of $d_j$.
\par 

Assume that $\{m_1,\dots,m_n\}$ is a $\mathbb{Z}_{+}\text-$basis of $M$, it follows from the above conclusion that there exist positive integers $c_{N_11}= \mathrm{max}\{c_{11},\dots,c_{(N_1-1)1},c_{N_11}\}$ such that  the corresponding matrix determined by $d_1^{\frac{1}{N_1}}$ has the form
$D_1=\begin{bmatrix}
C_1 &  &  & \\
 & C_1 &  &\\
 &  & \ddots &\\
 &  &  & C_1
\end{bmatrix},$
where $$C_1=
\begin{bmatrix}
0 & \cdots & 0 & c_{N_11}\\
c_{11} & \cdots & 0 & 0\\
\vdots & \ddots & \vdots &\vdots\\
0 & \cdots &c_{(N_1-1)1} & 0
\end{bmatrix}.$$
Note that for $2 \leq i \leq r$, $d_i$ is coprime to $d_1$. Lemma \ref{simpleLemm6} and Lemma \ref{simpleLemm2} state the matrix determined by $d_i^{\frac{1}{N_i}}$ has the following form $$A_i=\begin{bmatrix}
0 & A_{12} & \cdots & A_{1t}\\
A_{21} & 0 & \cdots & A_{2t}\\
\vdots & \vdots & \ddots &\vdots\\
A_{t1} & A_{t2} &\cdots & 0
\end{bmatrix},$$ where $A_{kl}$ are scalar matrices and $t=\frac{n}{N_1}$. Moreover, for any $1\leq k\leq t$, Corollary \ref{resultonZ[d]}  means there exists exactly one $1\leq l\leq t$ such that $A_{kl}\neq 0$. These $t$ non-zero scalars are denoted by $k_{1i},k_{2i},\dots,k_{ti}$, which are allowed to be same. For each $i$, we can find a permutation matrix $P_i$ such that $D_i=P_iA_iP_i^{-1}=\begin{bmatrix}
C_i &  &  & \\
 & C_i &  &\\
 &  & \ddots &\\
 &  &  & C_i
\end{bmatrix}$, $C_i=\begin{bmatrix}
0 & \cdots & 0 & c_{N_ii}\\
c_{1i} & \cdots & 0 & 0\\
\vdots & \ddots & \vdots &\vdots\\
0 & \cdots &c_{(N_i-1)i} & 0
\end{bmatrix}$ and $C_i$ repeats $\frac{n}{N_i}$ times. Therefore,
 $\{c_{1i},c_{2i},\dots,c_{N_{i}i}\}\subseteq\{k_{1i},k_{2i},\dots,k_{ti}\}$. For any $i$ with $2\leq i\leq r$, $D_1A_i$ has the same form as $A_i$, this means that the non-zero blocks of both in the same position, so 
\begin{align*}
\bigcup\limits_{p=1}^{N_i}\{c_{pi}c_{11},c_{pi}c_{21},\dots,c_{pi}c_{N_11}\}\subseteq \bigcup\limits_{q=1}^{t}\{k_{qi}c_{11},k_{qi}c_{21},\dots,k_{qi}c_{N_11}\}.
\end{align*} 
And $D_1A_i$ has exactly one non-zero element in each row and column, so all $c_{j_{i}i}c_{j_{1}1}$ are coefficients of
$D_1A_i$, where $1\leq j_1 \leq N_1$ and $1\leq j_i \leq N_i$.
Notice $d_1^{\frac{1}{N_1}}d_2^{\frac{1}{N_2}}\cdots d_r^{\frac{1}{N_r}}$ corresponds to the matrix of the form $D_1A_2A_3\cdots A_r$, by the commutative and associative law, we derive $$(D_1A_2)A_3\cdots A_r=(D_1A_3)A_2\cdots A_r=\cdots=(D_1A_r)A_2\cdots A_{r-1}.$$ Observe that the product $D_1A_2A_3\cdots A_r$ has exactly one non-zero element in each row and column, so 
all $c_{i_{1}1}c_{i_{2}2}\cdots c_{i_{r}r}$ are coefficients of
$D_1A_2A_3\cdots A_r$, where $1\leq i_l \leq N_l$ for all $1\leq l\leq r$. This implies that rank$(M)=n\geq N_1N_2\cdots N_r$.
\par
It follows from Corollary \ref{resultonZ[d]} that there exist a unique non-zero element in each row and column of the non-negative matrix determined by $d^{\frac{i_1}{N_{j_1}}}_{j_1}d^{\frac{i_2}{N_{j_2}}}_{j_2}\cdots d^{\frac{i_{s}}{N_{j_{s}}}}_{j_{s}}$, where $0\leq i_s\leq N_{j_{s}}-1$ for all $1\leq s \leq r$. Then it is easy to see that the rank of the generated $\mathbb{Z}_{+}\text-$submodule $m_1$ of $M$ is at most $N_1N_2\cdots N_r$. Since $M$ is irreducible, $M=(m_1)$ and rank$(M)=N_1N_2\cdots N_r$. 

Consequently, as a $\mathbb{Z}_{+}\text-$module of $\mathbb{Z}[d_j^{\frac{1}{N_j}}]$, we have
$
M\cong\bigoplus ^{\widehat{N_j}}_{i=1}M_i$,
and 
\begin{align*}
M_i\cong\mathbb{Z}\langle 
c^{\frac{1}{N_j}}_{(N_j-1)j}\cdots c^{\frac{N_j-2}{N_j}}_{2j} c^{\frac{N_j-1}{N_j}}_{1j},c^{\frac{1}{N_j}}_{N_jj}\cdots c^{\frac{N_j-2}{N_j}}_{3j} c^{\frac{N_j-1}{N_j}}_{2j},\dots,c^{\frac{1}{N_j}}_{(N_j-2)j}\cdots c^{\frac{N_j-2}{N_j}}_{1j} c^{\frac{N_j-1}{N_j}}_{N_jj}
\rangle,
\end{align*}
for all $1\leq i \leq \widehat{N_j}$, where $\widehat{N_j}=\frac{N_1N_2\cdots N_r}{N_j}.
$
This completes the proof of the theorem.
\end{proof}

\begin{rema}\label{Example}A part of the conclusion of Theorem \ref{TypeRank} becomes invalid if \(d_i\) and \(d_j\) share a non-trivial common divisor. For instance, consider the ring \(R = \mathbb{Z}[6^{\frac{1}{3}}, 10^{\frac{1}{3}}]\). It is evident that \(R\) is irreducible as a \(\mathbb{Z}_+\)-module of itself. As a \(\mathbb{Z}_+\)-module of \(\mathbb{Z}[6^{\frac{1}{3}}]\), we have:
\[
R \cong \mathbb{Z}\langle 1, 6^{\frac{1}{3}}, 6^{\frac{2}{3}} \rangle \oplus \mathbb{Z}\langle 2^{\frac{1}{3}}, 3^{\frac{1}{3}}2^{\frac{2}{3}}, 3^{\frac{2}{3}} \rangle \oplus \mathbb{Z}\langle 2^{\frac{2}{3}}, 3^{\frac{1}{3}}, 2^{\frac{1}{3}}3^{\frac{2}{3}} \rangle,
\]
yet, \(\mathbb{Z}\langle 1, 6^{\frac{1}{3}}, 6^{\frac{2}{3}} \rangle \not\cong \mathbb{Z}\langle 2^{\frac{1}{3}}, 3^{\frac{1}{3}}2^{\frac{2}{3}}, 3^{\frac{2}{3}} \rangle \not\cong \mathbb{Z}\langle 2^{\frac{2}{3}}, 3^{\frac{1}{3}}, 2^{\frac{1}{3}}3^{\frac{2}{3}} \rangle\) as \(\mathbb{Z}_+\)-modules of \(\mathbb{Z}[6^{\frac{1}{3}}]\).
\end{rema}

The following theorem generalizes the conclusion of  Corollary \ref{resultonZ[d]}.

\begin{theo}\label{maintheorem}
Let $R=\Z[d^{\frac{1}{N_1}}_1,d^{\frac{1}{N_2}}_2,\dots,d^{\frac{1}{N_r}}_r]$, where $d_1,d_2,\dots,d_r$ are pairwise relatively prime positive square-free integers with $d_j\geq 2$ for all $1\leq j \leq r$, and $N_i\geq2$ for all $1\leq i \leq r$. Then up to isomorphism $R$ has exactly $N_1^{m_1-1}N_2^{m_2-1}\cdots N_r^{m_r-1}$ irreducible $\mathbb{Z}_{+}\text-$modules,  where $m_j$ is the number of prime divisors of $d_j$. 
\end{theo}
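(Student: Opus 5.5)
The plan is to set up a bijection between isomorphism classes of irreducible $\mathbb{Z}_+$-modules of $R$ and $r$-tuples $(M^{(1)},\dots,M^{(r)})$, where $M^{(j)}$ is an irreducible $\mathbb{Z}_+$-module of $\mathbb{Z}[d_j^{\frac{1}{N_j}}]$. Once this bijection is in place, Corollary \ref{resultonZ[d]} gives $N_j^{m_j-1}$ choices for each $j$, and the product formula follows.

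\textbf{Construction.} Given an $r$-tuple, with $M^{(j)}$ determined by divisors $(c_{1j},\dots,c_{N_jj})$ with product $d_j$, I will form the tensor product
\[
M^{(1)}\otimes_{\mathbb{Z}}\cdots\otimes_{\mathbb{Z}} M^{(r)}.
\]
Its basis consists of the products of the basis vectors listed in Corollary \ref{resultonZ[d]}. On this basis $d_j^{\frac{1}{N_j}}$ acts by $I\otimes\cdots\otimes C_j\otimes\cdots\otimes I$, where $C_j$ is the cyclic matrix of Theorem \ref{oned}. These matrices commute, have non-negative entries, and each satisfies $C_j^{N_j}=d_j$, so every defining relation of $R$ holds and the tensor product is a $\mathbb{Z}_+$-module of rank $N_1\cdots N_r$. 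It is irreducible: the monomials $\prod_j d_j^{\frac{i_j}{N_j}}$ move any basis vector to a positive multiple of every other basis vector, since each factor $C_j$ is a weighted $N_j$-cycle. Concretely, the module can be realized inside $\mathbb{R}$ as the $\mathbb{Z}$-span of the products of the generators.

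\textbf{Exhaustiveness and injectivity.} By Theorem \ref{TypeRank}, an irreducible $M$ has rank $N_1\cdots N_r$, and its restriction to each $\mathbb{Z}[d_j^{\frac{1}{N_j}}]$ is $\widehat{N_j}$ copies of a single irreducible $M^{(j)}$. I would normalize each block by $c_{N_jj}=\max$, as in Remark \ref{remark1}. This gives a well-defined map $M\mapsto (M^{(1)},\dots,M^{(r)})$; it is invariant under isomorphism because conjugating by a permutation preserves the block type, as the block type is determined up to the normalization. To show that $M$ is isomorphic to the tensor product of its components, I will mimic the second half of the proof of Theorem \ref{TypeRank}. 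Pick a basis vector $m_1$. The vectors $\prod_j d_j^{\frac{i_j}{N_j}}m_1$ are positive multiples of distinct basis vectors, and by the rank count they exhaust the basis. Rescaling these orbit vectors identifies the action matrices with the tensor-product matrices. For the rescaling to be integral, one uses that each $d_j^{\frac{1}{N_j}}$ has exactly one nonzero entry per row and column, with entries dividing $d_j$, and that the $d_j$ are pairwise coprime. Finally, since the tensor product of the tuple restricts to $\widehat{N_j}$ copies of $M^{(j)}$, and distinct normalized $M^{(j)}$ are non-isomorphic (Lemma \ref{simpleLemm4}), different tuples give non-isomorphic modules.

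\textbf{Main obstacle.} The hard step is showing that the matrix of $d_i^{\frac{1}{N_i}}$, written in the basis adapted to $d_j^{\frac{1}{N_j}}$, is exactly $I\otimes C_i$ up to permutation. Lemma \ref{simpleLemm6} only says the blocks are scalar, with one nonzero scalar block per block row. I would prove the required compatibility of labels from commutativity, $C_jA=AC_j$ blockwise, together with the fact that $A^{N_i}=d_iI$ forces these scalars to cycle with product $d_i$ (Theorem \ref{oned} applied to the $t\times t$ quotient pattern). Then I would order the blocks along the cycles to obtain a Kronecker-product form, and iterate this over $i\neq j$.
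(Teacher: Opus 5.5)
Your proposal is correct and follows the paper's own route. You build the module $T(\ldots)$, which is exactly the tensor product of Remark \ref{Example2}; use Theorem \ref{TypeRank} to attach a normalized tuple to each irreducible $M$; identify $M$ with $T$ through the orbit of $m_1$; separate tuples by restricting to each $\mathbb{Z}[d_j^{\frac{1}{N_j}}]$; and count with Corollary \ref{resultonZ[d]}. The differences are minor: you prove irreducibility of the tensor product directly by transitivity instead of through the rank statement of Theorem \ref{TypeRank}, and you sketch a justification for the step $(m_1)\cong T$ that the paper only asserts, namely scalar blocks from Lemma \ref{simpleLemm6}, Theorem \ref{oned} applied to the quotient pattern, and iteration over the remaining indices.
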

\begin{proof}
  Let \(d_j = c_{1j} c_{2j} \cdots c_{N_jj}\), \(c_{N_jj} = \max\{c_{1j}, \dots, c_{N_jj}\}\) for \(1 \leq j \leq r\). Let
\begin{align*}
v_1:=&(\prod\limits_{i=1}^{N_1-1}c_{(N_1-i)1}^{\frac{i}{N_1}})(\prod\limits_{i=1}^{N_2-1}c_{(N_2-i)2}^{\frac{i}{N_2}})\cdots(\prod\limits_{i=1}^{N_r-1}c_{(N_r-i)r}^{\frac{i}{N_r}})\\
=&(c^{\frac{1}{N_1}}_{(N_1-1)1}\cdots c^{\frac{N_1-2}{N_1}}_{21}c^{\frac{N_1-1}{N_1}}_{11})(c^{\frac{1}{N_2}}_{(N_2-1)2}\cdots c^{\frac{N_2-2}{N_2}}_{22}c^{\frac{N_2-1}{N_2}}_{12})\cdots (c^{\frac{1}{N_r}}_{(N_r-1)r}\cdots c^{\frac{N_r-2}{N_r}}_{2r}c^{\frac{N_r-1}{N_r}}_{1r}),
\end{align*}
then $v_1$ generates a $\mathbb{Z}_{+}\text-$module $T(c_{11},\dots,c_{(N_1-1)1}):=(v_1)$ of $\mathbb{Z}[d_1^{\frac{1}{N_1}}]$, moreover it
is irreducible with 
$\mathbb{Z}_{+}\text-$basis
\begin{align*}
\{v_1,v_2=\frac{1}{c_{11}}d_1^{\frac{1}{N_1}}v_1,v_3=\frac{1}{c_{11}c_{21}}d_1^{\frac{2}{N_1}}v_1,\dots ,v_{N_1}=\frac{1}{c_{11}c_{21}\cdots c_{(N_1-1)1}}d_1^{\frac{N_1-1}{N_1}}v_1\}.
\end{align*}
 Then it is easy to see that 
$$\{v_1,v_2,\dots,v_{N_1},\frac{1}{c_{12}}d_2^{\frac{1}{N_2}}v_1,\frac{1}{c_{12}}d_2^{\frac{1}{N_2}}v_2,\dots,\frac{1}{c_{12}}d_2^{\frac{1}{N_2}}v_{N_1},\\$$
$$\frac{1}{c_{12}c_{22}}d_2^{\frac{2}{N_2}}v_1,\frac{1}{c_{12}c_{22}}d_2^{\frac{2}{N_2}}v_2,\dots,\frac{1}{c_{12}c_{22}}d_2^{\frac{2}{N_2}}v_{N_1},\dots,\\$$
$$\frac{1}{c_{12}c_{22}\cdots c_{(N_2-1)2}}d_2^{\frac{N_2-1}{N_2}}v_1,\frac{1}{c_{12}c_{22}\cdots c_{(N_2-1)2}}d_2^{\frac{N_2-1}{N_2}}v_2,\dots,\frac{1}{c_{12}c_{22}\cdots c_{(N_2-1)2}}d_2^{\frac{N_2-1}{N_2}}v_{N_1}\}$$
generate an irreducible $\mathbb{Z}_{+}\text-$module $$T((c_{11},\dots,c_{(N_1-1)1}),(c_{12},\dots,c_{(N_2-1)2}))$$ of 
$\mathbb{Z}[d_1^{\frac{1}{N_1}},d_2^{\frac{1}{N_2}}]$. By doing this construction inductively, we get a $\mathbb{Z}_{+}\text-$module of the domain $R$, which is determined by the sequence
$
((c_{11},\dots,c_{(N_1-1)1}),\dots,(c_{1r},\dots,c_{(N_r-1)r})),
$
and will be denoted by 
$$T((c_{11},\dots,c_{(N_1-1)1}),\dots,(c_{1r},\dots,c_{(N_r-1)r}))$$
below. Notice that
\begin{align*}
\mathrm{rank}(T((c_{11},\dots,c_{(N_1-1)1}),\dots,(c_{1r},\dots,c_{(N_r-1)r})))=N_1N_2\cdots N_r
\end{align*}
 by construction, then $T((c_{11},\dots,c_{(N_1-1)1}),\dots,(c_{1r},\dots,c_{(N_r-1)r}))$ must be an irreducible $\mathbb{Z}_{+}\text-$module by Theorem \ref{TypeRank}.
\par
Let $\{m_1,m_2,\dots,m_n\}$ be a $\mathbb{Z}_{+}\text-$basis of $M$, then for any $1\leq j\leq r$, it follows from Theorem \ref{TypeRank} that $n=N_1N_2\cdots N_r$ and 
$M\cong \bigoplus ^{\widehat{N_j}}_{i=1}M_i$, where
\begin{align*}
M_i\cong\mathbb{Z}\langle 
c^{\frac{1}{N_j}}_{(N_j-1)j}\cdots c^{\frac{N_j-2}{N_j}}_{2j} c^{\frac{N_j-1}{N_j}}_{1j},c^{\frac{1}{N_j}}_{N_jj}\cdots c^{\frac{N_j-2}{N_j}}_{3j} c^{\frac{N_j-1}{N_j}}_{2j},\dots,c^{\frac{1}{N_j}}_{(N_j-2)j}\cdots c^{\frac{N_j-2}{N_j}}_{1j} c^{\frac{N_j-1}{N_j}}_{N_jj}
\rangle
\end{align*}
 as $\mathbb{Z}[d^{\frac{1}{N_j}}_j]\text-$module, and $d_j=c_{1j}c_{2j}\cdots c_{N_jj}$,  \(c_{N_jj} = \max\{c_{1j}, \dots, c_{N_jj}\}\) for $1\leq j \leq r$. Thus, $M$ determines a sequence of positive integers 
\begin{align*}
((c_{11},\dots,c_{(N_1-1)1}),\dots,(c_{1r},\dots,c_{(N_r-1)r})).
\end{align*}
Then we deduce from the above construction that the generated submodule $(m_1)$ of $M$ is isomorphic to
$T((c_{11},\dots,c_{(N_1-1)1}),\dots,(c_{1r},\dots,c_{(N_r-1)r}))$. As $M$ is irreducible
\begin{align*}
M=T((c_{11},\dots,c_{(N_1-1)1}),\dots,(c_{1r},\dots,c_{(N_r-1)r})).
\end{align*}
Up to isomorphism, for any $1\leq j \leq r$, it follows from Corollary \ref{resultonZ[d]} that there are exactly $N_j^{m_j-1}$ irreducible $\mathbb{Z}_{+}\text-$modules over $\mathbb{Z}[d_j^{\frac{1}{N_j}}]$, where $m_j$ is the number of prime divisors of $d_j$. Moreover, by the definition of the 
$\mathbb{Z}_{+}\text-$module 
$
T((c_{11},\dots,c_{(N_1-1)1}),\dots,(c_{1r},\dots,c_{(N_r-1)r})),
$
it is easy to show that $$T((c_{11},\dots,c_{(N_1-1)1}),\dots,(c_{1r},\dots,c_{(N_r-1)r}))\cong T((b_{11}, \dots ,b_{(N_1-1)1}),\dots,(b_{1r}, \dots,b_{(N_r-1)r}))$$ as $\mathbb{Z}_{+}\text-$module of $R$ if and only if $c_{ij}=b_{ij}$, where $1\leq i\leq N_j-1$ for all $1\leq j\leq r$, hence the number of isomorphism classes of irreducible $\mathbb{Z}_{+}\text-$modules $$T((c_{11},\dots,c_{(N_1-1)1}),\dots,(c_{1r},\dots,c_{(N_r-1)r}))$$ is $N_1^{m_1-1}N_2^{m_2-1}\cdots N_r^{m_r-1}$. This completes the proof of the theorem.
\end{proof}

\begin{rema}\label{Example2} The use of tensor products facilitates a more convenient representation. By the definition of $\otimes_{\Z}$, we have the free $\Z$-module isomorphism
\begin{align*}
M_1\otimes_{\Z}M_2\otimes_{\Z}\cdots \otimes_{\Z}M_r &  \xrightarrow{\thicksim} T((c_{11},\dots,c_{(N_1-1)1}),\dots,(c_{1r},\dots,c_{(N_r-1)r}))=M\\
m_1\otimes m_2\otimes \cdots \otimes m_r &\mapsto m_1m_2\cdots m_r
,
\end{align*}
where $m_j\in M_j$ for $1\leq j\leq r,$
$$
M_j\cong\mathbb{Z}\langle 
c^{\frac{1}{N_j}}_{(N_j-1)j}\cdots c^{\frac{N_j-2}{N_j}}_{2j} c^{\frac{N_j-1}{N_j}}_{1j},c^{\frac{1}{N_j}}_{N_jj}\cdots c^{\frac{N_j-2}{N_j}}_{3j} c^{\frac{N_j-1}{N_j}}_{2j},\dots,c^{\frac{1}{N_j}}_{(N_j-2)j}\cdots c^{\frac{N_j-2}{N_j}}_{1j} c^{\frac{N_j-1}{N_j}}_{N_jj}
\rangle.
$$

Using the $\Z$-algebra isomorphism
\begin{align*}
\Z[d^{\frac{1}{N_1}}_1]\otimes_{\Z}\Z[d^{\frac{1}{N_2}}_2]\otimes_{\Z}\cdots \otimes_{\Z}\Z[d^{\frac{1}{N_r}}_r] &  \xrightarrow{\thicksim} \Z[d^{\frac{1}{N_1}}_1,d^{\frac{1}{N_2}}_2,\dots,d^{\frac{1}{N_r}}_r]=R\\
s_1\otimes s_2\otimes \cdots \otimes s_r &\mapsto s_1s_2\cdots s_r.
\end{align*}
The action of $R\cong  \Z[d^{\frac{1}{N_1}}_1]\otimes_{\Z}\Z[d^{\frac{1}{N_2}}_2]\otimes_{\Z}\cdots \otimes_{\Z}\Z[d^{\frac{1}{N_r}}_r]$ on $M$ is determined by
\begin{align*}
\Z[d^{\frac{1}{N_1}}_1]\otimes_{\Z}\cdots \otimes_{\Z}\Z[d^{\frac{1}{N_r}}_r]\times M_1\otimes_{\Z}M_2\otimes_{\Z}\cdots \otimes_{\Z}M_r &\xrightarrow{\thicksim} M_1\otimes_{\Z}M_2\otimes_{\Z}\cdots \otimes_{\Z}M_r \\
s_1\otimes s_2\otimes \cdots \otimes s_r \times m_1\otimes m_2\otimes \cdots \otimes m_r &\mapsto s_1m_1\otimes s_2m_2\otimes \cdots \otimes s_rm_r.
\end{align*}
Under the action of $R$, the above $\Z$-module isomorphism lifts to $\Z_{+}$-module isomorphism.

Thus, we can regard \( M \) as an irreducible \(\mathbb{Z}_+\)-module of $$R\cong  \Z[d^{\frac{1}{N_1}}_1]\otimes_{\Z}\Z[d^{\frac{1}{N_2}}_2]\otimes_{\Z}\cdots \otimes_{\Z}\Z[d^{\frac{1}{N_r}}_r].$$ Then $M$ admits a canonical tensor product decomposition
$$M\cong M_1\otimes_{\Z}M_2\otimes_{\Z}\cdots \otimes_{\Z}M_r,$$
where $M_i$ is an irreducible $\mathbb{Z}_{+}\text-$module of $\mathbb{Z}[d^{\frac{1}{N_i}}_i]$ for $1\leq i \leq r$.

\end{rema}

\begin{exam}
Let $R=\Z[6^{\frac{1}{2}},35^{\frac{1}{3}}]$, and let $M$ be an irreducible $\Z_+$-module of $R$, then $M\cong T(c_{11},c_{12},c_{22})$ by Theorem \ref{maintheorem}, where $c_{11}$ is a divisor of $6$ and $c_{12},c_{22}$ are divisors of $35$. Hence, $M$ is isomorphic to one of the following  irreducible $\Z_+$-modules:
\begin{align*}
&T(1,1,1)\cong\Z\langle 1,6^{\frac{1}{2}},35^{\frac{1}{3}},6^{\frac{1}{2}}35^{\frac{1}{3}}, 35^{\frac{2}{3}},6^{\frac{1}{2}}35^{\frac{2}{3}}\rangle \cong\Z [6^{\frac{1}{2}}] \otimes _{\Z}\Z[35^{\frac{1}{3}}]\cong R,\\
&T(1,1,5)\cong\Z\langle 5^{\frac{1}{3}},6^{\frac{1}{2}}5^{\frac{1}{3}},7^{\frac{1}{3}}5^{\frac{2}{3}},6^{\frac{1}{2}}7^{\frac{1}{3}}5^{\frac{2}{3}}, 7^{\frac{2}{3}},6^{\frac{1}{2}}7^{\frac{2}{3}}\rangle\cong\Z\langle 1,6^{\frac{1}{2}}\rangle \otimes _{\Z}\Z\langle 5^{\frac{1}{3}},7^{\frac{1}{3}}5^{\frac{2}{3}},7^{\frac{2}{3}}\rangle , \\
&T(1,5,1)\cong\Z\langle 5^{\frac{2}{3}},6^{\frac{1}{2}}5^{\frac{2}{3}},7^{\frac{1}{3}},6^{\frac{1}{2}}7^{\frac{1}{3}},5^{\frac{1}{3}}7^{\frac{2}{3}},6^{\frac{1}{2}}5^{\frac{1}{3}}7^{\frac{2}{3}}\rangle \cong\Z\langle 1,6^{\frac{1}{2}}\rangle \otimes _{\Z}\Z\langle 5^{\frac{2}{3}},7^{\frac{1}{3}},5^{\frac{1}{3}}7^{\frac{2}{3}}\rangle , \\
&T(2,1,1)\cong\Z\langle 2^{\frac{1}{2}},3^{\frac{1}{2}},2^{\frac{1}{2}}35^{\frac{1}{3}},3^{\frac{1}{2}}35^{\frac{1}{3}}, 2^{\frac{1}{2}}35^{\frac{2}{3}},3^{\frac{1}{2}}35^{\frac{2}{3}}\rangle\cong\Z\langle 2^{\frac{1}{2}},3^{\frac{1}{2}}\rangle \otimes _{\Z}\Z\langle1,35^{\frac{1}{3}},35^{\frac{2}{3}}\rangle ,\\
&T(2,1,5)\cong\Z\langle 2^{\frac{1}{2}}5^{\frac{1}{3}},3^{\frac{1}{2}}5^{\frac{1}{3}},2^{\frac{1}{2}}7^{\frac{1}{3}}5^{\frac{2}{3}},3^{\frac{1}{2}}7^{\frac{1}{3}}5^{\frac{2}{3}}, 2^{\frac{1}{2}}7^{\frac{2}{3}},3^{\frac{1}{2}}7^{\frac{2}{3}}\rangle\cong\Z\langle 2^{\frac{1}{2}},3^{\frac{1}{2}}\rangle \otimes _{\Z}\Z\langle5^{\frac{1}{3}},7^{\frac{1}{3}}5^{\frac{2}{3}},7^{\frac{2}{3}}\rangle ,\\
&T(2,5,1)\cong\Z\langle 2^{\frac{1}{2}}5^{\frac{2}{3}},3^{\frac{1}{2}}5^{\frac{2}{3}}, 2^{\frac{1}{2}}7^{\frac{1}{3}},3^{\frac{1}{2}}7^{\frac{1}{3}},2^{\frac{1}{2}}5^{\frac{1}{3}}7^{\frac{2}{3}},3^{\frac{1}{2}}5^{\frac{1}{3}}7^{\frac{2}{3}}\rangle\cong\Z\langle 2^{\frac{1}{2}},3^{\frac{1}{2}}\rangle \otimes _{\Z}\Z\langle 5^{\frac{2}{3}},7^{\frac{1}{3}},5^{\frac{1}{3}}7^{\frac{2}{3}}\rangle.
\end{align*}
\end{exam}

\begin{coro}
 Assume $p_1,p_2,\dots,p_r$ are distinct primes. Let $R=\mathbb{Z}[p_1^{\frac{1}{N_1}},p_2^{\frac{1}{N_2}}, \dots,p_r^{\frac{1}{N_r}}]$, where $N_i\geq2$ for all $1\leq i \leq r$. Let $M$ be an irreducible $\mathbb{Z}_{+}\text-$module of $R$. Then $M\cong R$ as  $\mathbb{Z}_{+}\text-$module.
\end{coro}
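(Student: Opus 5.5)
This is a specialization of Theorem \ref{maintheorem} with $d_j=p_j$, so each $m_j=1$. The plan is to count the irreducible modules, show $R$ is one of them, and conclude.

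First, the hypotheses of Theorem \ref{maintheorem} hold. The $d_j=p_j$ are distinct primes, hence pairwise coprime, square-free and at least $2$, and $N_i\geq 2$. Since each $p_j$ has exactly one prime divisor, Theorem \ref{maintheorem} gives exactly $N_1^{0}N_2^{0}\cdots N_r^{0}=1$ isomorphism class of irreducible $\mathbb{Z}_+$-modules of $R$.

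Second, $R$ is itself an irreducible $\mathbb{Z}_+$-module of $R$. Its $\mathbb{Z}_+$-basis is the set of monomials $p_1^{i_1/N_1}\cdots p_r^{i_r/N_r}$ with $0\le i_s\le N_s-1$, and the left regular action has non-negative integer structure constants. Any $\mathbb{Z}_+$-submodule containing a basis monomial $x$ is closed under multiplication by basis elements. Multiplying $x$ by the basis monomial with exponents $N_s-i_s$ (taken mod $N_s$) gives a positive integer multiple of $1$, and since a $\mathbb{Z}_+$-submodule is spanned by a subset of the basis, $1$ must lie in it. Then the submodule contains $R\cdot 1=R$, so $R$ is irreducible.

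Alternatively, one can use the explicit description. Since $p_j$ is prime, any factorization $c_{1j}\cdots c_{N_jj}=p_j$ with $c_{N_jj}$ maximal forces $c_{N_jj}=p_j$ and all other $c_{ij}=1$. The construction in the proof of Theorem \ref{maintheorem} then gives $v_1=1$ and basis vectors $p_j^{k/N_j}$ at each stage. So the unique module $T((1,\dots,1),\dots,(1,\dots,1))$ is exactly $R$ with its monomial basis.

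Combining the two steps, every irreducible $M$ is isomorphic to the unique class, which is $R$. There is no real obstacle here. The only point needing care is the irreducibility of $R$, or equivalently the identification of $T(1,\dots,1)$ with $R$, and both are routine.
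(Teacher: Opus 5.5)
Your proposal is correct and follows the paper's intended route. The corollary is Theorem \ref{maintheorem} with every $m_j=1$, which gives a single isomorphism class; that class is $R$, because $c_{N_jj}=p_j$ and all other $c_{ij}=1$ make $T((1,\dots,1),\dots,(1,\dots,1))$ equal to $R$ with its monomial basis (compare the example $T(1,1,1)\cong R$), and your direct irreducibility check for $R$ is a harmless extra confirmation of what the paper obtains from Theorem \ref{TypeRank}.
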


\end{document}